\providecommand{\U}[1]{\protect\rule{.1in}{.1in}}
\newtheorem{theorem}{Theorem}[section]
\newtheorem{acknowledgement}[theorem]{Acknowledgement}
\newtheorem{corollary}[theorem]{Corollary}
\newtheorem{definition}[theorem]{Definition}
\newtheorem{lemma}[theorem]{Lemma}
\newtheorem{proposition}[theorem]{Proposition}
\newtheorem{remark}[theorem]{Remark}
\newenvironment{proof}[1][Proof]{\noindent\textbf{#1.} }{\ \rule{0.5em}{0.5em}}
\begin{document}

\title{Leavitt path algebras with finitely presented irreducible representations }
\author{Kulumani M. Rangaswamy\thanks{\textit{2010 Mathematics Subject Classification}%
: 16D70; \textit{Key words and phrases}: Leavitt path algebras, arbitrary
graphs, simple modules, finitely presented modules, Gelfand-Kirillov
dimension.}\\Department of Mathematics, University of Colorado \\Colorado Springs, Colorado 80918, USA\\E-mail: krangasw@uccs.edu}
\date{}
\maketitle

\begin{abstract}
Let $E$ be an arbitrary graph, $K$ be any field and let $L=L_{K}(E)$ be the
corresponding Leavitt path algebra. Necessary and sufficient conditions (both
graphical and algebraic) are given under which all the irreducible
representations of $L$ are finitely presented. In this \ case, the graph $E$
\ turns out to be row-finite and the cycles in $E$ form an artinian partial
ordered set under a defined relation $\geq$. When the graph is $E$ is finite,
the above graphical conditions were shown in \cite{AAJZ1} to be equivalent to
$L_{K}(E)$ having finite Gelfand-Kirillov dimension. Examples show that this
equivalence no longer holds for infinite graphs and a complete description is
obtained of Leavitt path algebras over arbitrary graphs having finite
Gelfand-Kirillov dimensions.

\end{abstract}

\section{Introduction and Preliminaries}

The notion of Leavitt path algebras was introduced and initially studied in
\cite{AA}, \cite{AMP} as algebraic analogues of graph C$^{\ast}$-algebras and
as the natural generalization of the Leavitt algebras of type (1,n) built in
\cite{Le}. The module theory over Leavitt path algebras was initiated in
\cite{AB} and in other recent papers (\cite{AR1}, \cite{AR2}, \cite{R}). In
\cite{GR}, Goncalves and Royer indicated a method of constructing various
representations of a Leavitt path algebra $L_{K}(E)$ over a graph $E$ by using
the concept of algebraic branching systems. Expanding this, Chen \cite{C}
studied special types of irreducible representations of $L_{K}(E)$ induced by
the sinks as well as the equivalence class $[p]$ of infinite paths
tail-equivalent (see definition below) to a fixed infinite path $p$ in $E$ and
he further noted that these can also be considered as algebraic branching
systems. Additional ways of constructing irreducible representations of
$L_{K}(E)$ were pointed out in \cite{AR1} while in \cite{R} a new class
$\mathbf{S}_{v}$ of irreducible representations was constructed using vertices
$v$ which emit infinitely many edges.

When $E$ is a finite graph, it was shown in \cite{AR1} that every simple left
module over $L_{K}(E)$ is finitely presented if and only if every vertex in
$E$ is the base of at most one cycle. In this paper, we wish to extend this
theorem to the case when $E$ is an arbitrary graph. Unlike the case of finite
graphs, the existence of infinite paths and vertices emitting infinitely many
edges are to be dealt with appropriately. We first show that if a vertex $v$
in graph $E$ emits infinitely many edges, then the corresponding simple module
$\mathbf{S}_{v}$ defined in \cite{R} is not finitely presented. Thus the graph
$E$ must be row-finite if every simple left module over $L_{K}(E)$ is to be
finitely presented. Generalization of a Lemma from \cite{AR1} to the case of
row-finite graphs shows that simple modules induced by infinite irrational
paths not contaning a line point (see definition below) are also not finitely
presented. Eliminating these possibilities, we are able to obtain a complete
characterization (both graphical and algebraic) of Leavitt path algebras
$L_{K}(E)$ over which every simple left/right module is finitely presented,
thus leading to the following main theorem. Here we denote a pre-order $\geq$
among cycles by writing $c\geq c^{\prime}$ for two cycles $c$ and $c^{\prime}$
if there is a path connecting a vertex on $c$ to a vertex on $c^{\prime}$.

\begin{theorem}
Let $E$ be an arbitrary graph, $K$ be any field and let $L=L_{K}(E)$. Then the
following statements are equivalent:

(1) \ Every simple left/right $L$-module is finitely presented;

(2) \ $L$ is the union of a continuous well-ordered ascending chain of graded
ideals%
\[
0\leq I_{1}<\cdot\cdot\cdot<I_{\alpha}<I_{\alpha+1}<\cdot\cdot\cdot
\qquad\qquad\qquad(\alpha<\tau)\qquad\qquad(\ast\ast\ast)
\]
where $\tau$ is a suitable ordinal, $I_{1}=Soc(L)$ and, for each $\alpha\geq1$
with $I_{\alpha}\neq L$, $I_{\alpha+1}/I_{\alpha}\cong M_{\Lambda_{\alpha}%
}(K[x,x^{-1}])$, where $\Lambda_{\alpha}$ is an arbitrary index set (depending
on $\alpha$).\ 

(3) $E$ is row-finite, and either (a) $E^{0}$ is the saturated closure of the
set of all line points in $E$ (and is, in particular, acyclic) or (b)(i) $E$
contains cycles and the set $C$ of all the cycles in $E$ becomes an artinian
partially ordered set under the relation $\geq$, (ii) every infinite path in
$E$ either contains a line point or is tail equivalent to a rational path and
(iii) For every proper hereditary saturated subset of vertices $H$ containing
all the lines points in $E$, $E\backslash H$ contains cycles without exits but
does not contain any line points.
\end{theorem}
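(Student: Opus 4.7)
I would prove $(1) \Rightarrow (3) \Rightarrow (2) \Rightarrow (1)$. The introduction has already isolated the two main obstructions to finite presentation of simples: infinite emitters (via the modules $\mathbf{S}_v$) and infinite irrational paths that avoid line points (via Chen modules). Eliminating these obstructions yields the graphical picture of (3); from (3) one builds a transfinite chain of graded ideals by peeling off the socle and then successive layers of exit-free cycles; and from (2) every simple module is either annihilated by the socle layer or lies faithfully over a matrix algebra $M_{\Lambda}(K[x,x^{-1}])$, where finite presentability is elementary.

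\textbf{From (1) to (3).} Row-finiteness is immediate from the cited fact that $\mathbf{S}_v$ is not finitely presented whenever $|s^{-1}(v)| = \infty$. The ``line-point or rational-tail'' dichotomy (3)(b)(ii) is given by the promised generalization of the lemma from \cite{AR1}. For the artinian condition (3)(b)(i), I would argue contrapositively: a strictly descending chain of cycles $c_1 > c_2 > c_3 > \cdots$ may be spliced with the connecting paths into a single infinite path that leaves every $c_i$; this path is irrational and contains no line point (line points have only one descendant structure), contradicting (3)(b)(ii). Condition (3)(b)(iii) then follows by applying (b)(i) and (b)(ii) inside the quotient graph $E \setminus H$ (whose cycle poset inherits artinianness from that of $E$) and observing that a minimal cycle in $E \setminus H$ can have no exit, since an exit would eventually either hit a line point (impossible, as $H$ contains all line points) or enter a strictly smaller cycle.

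\textbf{From (3) to (2).} Set $I_1 := \mathrm{Soc}(L)$, which equals the graded ideal generated by the line points, with corresponding hereditary-saturated vertex set $H_1$. In case (a), $H_1 = E^0$ and $I_1 = L$, so the chain has length two. In case (b), condition (3)(b)(iii) applied to $H_1$ furnishes a nonempty family of exit-free cycles in $E \setminus H_1$; let $X_1$ be their vertex set and define $I_2$ as the graded ideal generated by $I_1 \cup X_1$. A standard Leavitt-path-algebra computation (the relevant one-cycle-per-vertex case is essentially that of \cite{AAJZ1}) shows $I_2/I_1 \cong M_{\Lambda_1}(K[x,x^{-1}])$, with $\Lambda_1$ indexed by the trees hanging onto the chosen cycles. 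Iterate at successor stages by reapplying (3)(b)(iii) to $E \setminus H_\alpha$, and take unions at limit stages to get continuity. Termination at some ordinal $\tau$ with $I_\tau = L$ is guaranteed by the artinian hypothesis on cycles together with the dichotomy (3)(b)(ii).

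\textbf{From (2) to (1), and the main obstacle.} Given a simple $L$-module $M$, choose the least $\alpha$ with $I_\alpha M \neq 0$. This $\alpha$ is a successor $\beta+1$ (by continuity of the chain), $I_\beta M = 0$, and simplicity forces $(I_{\beta+1}/I_\beta) M = M$; so $M$ is a faithful simple module either over the socle layer (corresponding to a single line point, hence finitely presented) or over $M_{\Lambda_\beta}(K[x,x^{-1}])$, where every simple module is of the form $K[x,x^{-1}]/(f)$ (in matrix dress) and is finitely presented. Lifting the presentation through the graded extension $0 \to I_\beta \to L \to L/I_\beta \to 0$ preserves finite presentability because $I_\beta$ is generated by vertices (idempotents). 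The hardest part, I expect, is the well-definedness of the chain in (3) $\Rightarrow$ (2): verifying that $I_{\alpha+1}/I_\alpha$ is a \emph{pure} matrix algebra over $K[x,x^{-1}]$ with no stray socle contribution (where (3)(b)(iii) is used crucially to kill any nascent line points in the successive quotient graphs), checking continuity at limit ordinals by matching unions of graded ideals with unions of hereditary-saturated sets (where row-finiteness is essential), and confirming termination from the artinian cycle poset. It is in orchestrating these three ingredients simultaneously that the delicate part of the argument lies.
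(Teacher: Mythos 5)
Your overall architecture ((1)$\Rightarrow$(3)$\Rightarrow$(2)$\Rightarrow$(1)) and most of the ingredients match the paper's, but there is a genuine gap in your (2)$\Rightarrow$(1) step, and it sits exactly where the paper has to work hardest. Having reduced to a simple module $S$ with $I_\beta S = 0$ and $(I_{\beta+1}/I_\beta)S = S$, you assert that a finite presentation of $S$ over $L/I_\beta$ (equivalently over $M_{\Lambda_\beta}(K[x,x^{-1}])$) lifts to one over $L$ ``because $I_\beta$ is generated by vertices (idempotents).'' That is not an argument: writing $S \cong Lv/N$ with $Lv\cap I_\beta \subseteq N$, finite generation of $N/(Lv\cap I_\beta)$ over the categorically noetherian quotient is the easy half; the substantive claim is that $Lv \cap I_\beta$ is itself finitely generated as a left $L$-module, and a graded ideal generated by idempotents need not meet $Lv$ in a finitely generated left ideal (there can be infinitely many ``first-entry'' paths from $v$ into $H_\beta = I_\beta\cap E^0$). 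The paper's Proposition \ref{Extension by matrix rings} spends Case 2 precisely on this: it uses the structure theorem for $M_\Lambda(K[x,x^{-1}])$ (unique exit-free cycle $c$, every infinite path tail-equivalent to $ccc\cdots$) to prove $T_{E\backslash H}(v)$ is finite, and from that extracts finitely many generators $e_{jk}^{*}\gamma_j^{*}$ of $Lv\cap I_\beta$. Without this (or an equivalent) your implication does not close. Note also that you misidentify the delicate point as the (3)$\Rightarrow$(2) construction; that direction is comparatively routine (Proposition 3.7 of \cite{AAPS} plus transfinite induction).

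A secondary, smaller issue: in deriving (3)(b)(iii) from (1) you write that an exit from a minimal cycle in $E\backslash H$ cannot ``hit a line point'' because ``$H$ contains all line points.'' This conflates line points of $E$ with line points of $E\backslash H$: deleting the edges into $H$ can create new line points in the quotient graph. The paper handles this by showing that a line point of $E\backslash H$ would produce an infinite bifurcation-free path whose vertices are neither line points of $E$ nor on cycles, hence an infinite irrational path with no line points, contradicting Corollary \ref{Irrational => no fp}. You also do not address why $E\backslash H$ must contain a cycle at all (the paper gets this from Theorem \ref{No cycles} applied to $L/I(H)$). These can be repaired along the paper's lines, but as written they are not established.
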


Observing that, for a finite graph $E$, Condition (3)(b)(i) of the above
theorem is equivalent to the condition that distinct cycles in $E$ have no
common vertex and that Conditions (3)(b)(ii) and (iii) are automatically
satisfied, we obtain the main theorem of \cite{AR1}:

\begin{corollary}
\cite{AR1} If $E$ is a finite graph, then every simple left module over
$L_{K}(E)$ is finitely presented if and only if distinct cycles in $E$ are
disjoint, that is, have no common vertex.
\end{corollary}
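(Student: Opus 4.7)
The plan is to derive the corollary by specializing Theorem~1.1 to a finite $E$ and showing that conditions (3)(a) and (3)(b) collapse to the single statement ``distinct cycles in $E$ are disjoint.'' A finite graph is automatically row-finite, so the row-finiteness clause is free. I would first dispose of the acyclic case: if $E$ has no cycles, then disjointness is vacuously true, and (3)(a) follows by saturation induction starting from the sinks of $E$, since sinks are line points and in a finite acyclic graph the saturated closure of the sinks exhausts $E^0$.

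Assuming $E$ contains cycles, the main step is (3)(b)(i). Since $C$ is finite, the artinian hypothesis is automatic, so the content is antisymmetry of the pre-order $\geq$. If two distinct cycles $c, c'$ share a vertex $v$, the length-zero path at $v$ witnesses both $c \geq c'$ and $c' \geq c$, and antisymmetry forces $c = c'$, a contradiction. Conversely, given distinct cycles $c \neq c'$ with $c \geq c'$ and $c' \geq c$, I would take a linking edge $e$ from some $s(e) \in c^0$ toward $c'$, a path from $c'$ back to $c$, and suitable arcs of $c$ and $c'$, forming a closed walk containing $e$; the cycle extracted through $e$ from this walk meets both $c^0$ and $c'^0$, hence is distinct from $c$ yet shares the vertex $s(e) \in c^0$ with $c$, yielding two distinct cycles with a common vertex. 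Thus (3)(b)(i) is equivalent to ``distinct cycles disjoint.''

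Conditions (3)(b)(ii) and (iii) are then automatic under disjointness. For (ii), any infinite path in a finite graph eventually settles into a single strongly connected component; disjointness forces each nontrivial strongly connected component to consist of exactly one cycle, so the tail of such a path merely iterates that cycle and the path is tail-equivalent to a rational one. For (iii), given a proper hereditary saturated $H$ containing all line points, saturation forces $E \setminus H$ to contain no sinks: a sink of $E$ is a line point hence already in $H$, while a non-sink of $E$ whose emitted edges all land in $H$ is dragged into $H$ by saturation. In a finite graph without sinks, a putative line point of $E \setminus H$ would have a unique outgoing edge and hence a unique forward ray that necessarily cycles back, contradicting the no-cycle condition of being a line point; so $E \setminus H$ has no line points, and a minimal strongly connected component of the condensation of $E \setminus H$ must be a cycle without exits. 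The main delicacy I anticipate is precisely this step in (iii), where saturation of $H$ and the inclusion of all line points of $E$ in $H$ must be combined carefully to rule out both sinks and new line points in the quotient; once this is handled, the corollary falls out of Theorem~1.1.
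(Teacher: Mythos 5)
Your proposal is correct and takes essentially the same route as the paper: specialize the main Theorem 1.1 to a finite graph, note that row-finiteness, the artinian condition, and conditions (3)(b)(ii)--(iii) hold automatically there, and identify antisymmetry of $\geq$ with disjointness of distinct cycles (the paper records these reductions as brief observations, which your write-up simply fills in). The only slip is immaterial: the cycle you extract through the exit edge $e$ need not meet $(c')^{0}$, but it is distinct from $c$ anyway because it contains the edge $e\notin c^{1}$, so your conclusion stands.
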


Interestingly, the graphical condition for a finite graph $E$ in the preceding
corollary (that distinct cycles in $E$ have no common vertex) has been shown
in \cite{AAJZ1} to be equivalent to the condition that the corresponding
Leavitt path algebra $L_{K}(E)$ has finite Gelfand-Kirillov dimension (for
short, GK-dimension). A natural question is whether this equivalence extends
to arbitrary graphs. After constructing examples showing that this equivalence
no longer holds for infinite graphs, use of a direct limit construction done
in \cite{AR} leads to an easy extension of the result of \cite{AAJZ1} to
arbitrary graphs (Theorem \ref{Finite GK-dim} ). These algebras seem to be
"made up" of von Neumann regular rings and the Laurent polynomial ring
$K[x,x^{-1}]$.

A (directed) graph $E=(E^{0},E^{1},r,s)$ consists of two sets $E^{0}$ and
$E^{1}$ together with maps $r,s:E^{1}\rightarrow E^{0}$. The elements of
$E^{0}$ are called \textit{vertices} and the elements of $E^{1}$
\textit{edges}. We generally follow the notation, terminology and results from
\cite{AAS}, \cite{AA} and \cite{AMP}. We outline some of the concepts and
results that will be used in this paper.

A vertex $v$ is called a \textit{sink} if it emits no edges, that is,
$s^{-1}(v)=\emptyset$, the empty set. The vertex $v$ is called a
\textit{regular vertex} if $s^{-1}(v)$ is finite and non-empty and $v$ is
called an \textit{infinite emitter} if $s^{-1}(v)$ is infinite. For each $e\in
E^{1}$, we call $e^{\ast}$ a ghost edge. We let $r(e^{\ast})$ denote $s(e)$,
and we let $s(e^{\ast})$ denote $r(e)$. A \textit{finite path} $\mu$ of length
$n>0$ is a finite sequence of edges $\mu=e_{1}e_{2}\cdot\cdot\cdot e_{n}$ with
$r(e_{i})=s(e_{i+1})$ for all $i=1,\cdot\cdot\cdot,n-1$. In this case
$\mu^{\ast}=e_{n}^{\ast}\cdot\cdot\cdot e_{2}^{\ast}e_{1}^{\ast}$ is the
corresponding ghost path. Any vertex $v$ is considered a path of length $0$.
The set of all vertices on a path $\mu$ is denoted by $\mu^{0}$.

Given an arbitrary graph $E$ and a field $K$, the \textit{Leavitt path algebra
}$L_{K}(E)$ is defined to be the $K$-algebra generated by a set $\{v:v\in
E^{0}\}$ of pairwise orthogonal idempotents together with a set of variables
$\{e,e^{\ast}:e\in E^{1}\}$ which satisfy the following conditions:

(1) \ $s(e)e=e=er(e)$ for all $e\in E^{1}$.

(2) $r(e)e^{\ast}=e^{\ast}=e^{\ast}s(e)$\ for all $e\in E^{1}$.

(3) (The "CK-1 relations") For all $e,f\in E^{1}$, $e^{\ast}e=r(e)$ and
$e^{\ast}f=0$ if $e\neq f$.

(4) (The "CK-2 relations") For every regular vertex $v\in E^{0}$,
\[
v=\sum_{e\in E^{1},s(e)=v}ee^{\ast}.
\]

A subgraph $F$ of a graph $E$ is called a \textit{complete subgraph }if, for
any vertex $v\in F$, $s_{F}^{-1}(v)=s_{E}^{-1}(v)$. In this case the
subalgebra generated by $F$ is isomorphic to $L_{K}(F)$.

A path $\mu$ $=e_{1}\dots e_{n}$ in $E$ is \textit{closed} if $r(e_{n}%
)=s(e_{1})$, in which case $\mu$ is said to be \textit{based at the vertex
}$s(e_{1})$. A closed path $\mu$ as above is called \textit{simple} provided
it does not pass through its base more than once, i.e., $s(e_{i})\neq
s(e_{1})$ for all $i=2,...,n$. The closed path $\mu$ is called a
\textit{cycle} if it does not pass through any of its vertices twice, that is,
if $s(e_{i})\neq s(e_{j})$ for every $i\neq j$. An \textit{exit }for a path
$\mu=e_{1}\dots e_{n}$ is an edge $f$ such that $s(f)=s(e_{i})$ for some $i$
and $f\neq e_{i}$. The graph $E$ is said to satisfy \textit{Condition (L)} if
every closed path has an exit. $E$ is said to satisfy \textit{Condition (K)}
if each vertex in $E$ is the base of either no closed path or at least two
distinct closed paths. Condition (K) always implies Condition (L).

A subset $H$ of $E^{0}$ is called \textit{hereditary} if, whenever $v\in H$
and there is a path from $v$ to $w\in E^{0}$, then $w\in H$. A hereditary set
is \textit{saturated} if, for any regular vertex $v$, $r(s^{-1}(v))\subseteq
H$ implies $v\in H$. If $E$ is row-finite and $I$ is the ideal generated by a
hereditary saturated set $H$ of vertices, then $L/I\cong L_{K}(E\backslash H)$
where $E\backslash H$ is the "quotient graph" defined by setting $(E\backslash
H)^{0}=E^{0}\backslash H$ and $(E\backslash H)^{1}=\{e\in E^{1}:r(e)\notin
H\}$, and the maps $r,s$ are the same (see, \cite{AMP}). Moreover, every
element of $I$ is a $K$-linear combination of monomials of the form $pq^{\ast
}$ where $r(p)=r(q)\in H$.

We shall also be using the following concepts and results from \cite{T}.

Let $E$ be an arbitrary graph and let $H$ be a hereditary saturated subset of
vertices in $E$. An infinite emitter $v$ is called a \textit{breaking vertex
}for $H$ if $0<|s^{-1}(v)\cap r^{-1}(E^{0}\backslash H)|<\infty$. The set of
all breaking vertices for $H$ is denoted by $B_{H}$. If $v\in B_{H}$, the
$v^{H}$ denotes the element $v-%
%TCIMACRO{\tsum \limits_{e\in s^{-1}(v),r(e)\neq H}}%
%BeginExpansion
{\textstyle\sum\limits_{e\in s^{-1}(v),r(e)\neq H}}
%EndExpansion
ee^{\ast}$. If $I$ a graded ideal of $L_{K}(E)$ with $I\cap E^{0}=H$ and
$S=\{v\in B_{H}:v^{H}\in I\}$, then it was shown in \cite{T} that $I$ is the
ideal generated by $H\cup\{v^{H}:v\in S\}$ and is denoted by $I(H,S)$. It was
also shown in \cite{T} that $L_{K}(E)/I(H,S)\cong L_{K}(E\backslash(H,S))$
\ where $E\backslash(H,S)$ is the quotient graph given by

$(E\backslash(H,S))^{0}:=E^{0}\backslash H\cup\{u^{\prime}:u\in B_{H}%
\backslash S\};$

$(E\backslash(H,S))^{1}:=\{e\in E^{1}:r(e)\notin H\}\cup\{e^{\prime}:e\in
E^{1},r(e)\in B_{H}\backslash S\}.$

Here $r$ and $s$ are extended to $(E\backslash(H,S))^{0}$ by setting
$s(e^{\prime})=s(e)$ and $r(e^{\prime})=r(e)^{\prime}$. We shall using the
fact that $u^{\prime}$ is a sink for each $u\in B_{H}\backslash S$.

Given an infinite path $p=e_{1}e_{2}\cdot\cdot\cdot e_{n}\cdot\cdot\cdot$
\ and an integer $n\geq1$, Chen (\cite{C}) defines $\tau_{\leq n}%
(p)=e_{1}\cdot\cdot\cdot e_{n}$ and $\tau_{>n}(p)=e_{n+1}e_{n+2}\cdot
\cdot\cdot$ . Two infinite paths $p,q$ are said to be \textit{tail-equivalent}
if there exist positive integers $m,n$ \ such that $\tau_{>m}(p)=\tau_{>n}%
(q)$. This is an equivalence relation and the equivalence class of all paths
tail equivalent to an infinite path $p$ is denoted by $[p]$. An infinite path
$p$ is called a \textit{rational path} if $p=ggg\cdot\cdot\cdot$ where $g$ is
some (finite) closed path in $E$. Given an infinite path $p$, Chen defines
$V_{[p]}=%
%TCIMACRO{\tbigoplus \limits_{q\in\lbrack p]}}%
%BeginExpansion
{\textstyle\bigoplus\limits_{q\in\lbrack p]}}
%EndExpansion
Kq$, a $K$-vector space having $\{q:q\in\lbrack p]\}$ as a basis. $V_{[p]}$ is
made a left $L$-module by defining the module operation $\cdot$, for all
$q\in\lbrack p]$ and all $v\in E^{0}$, $e\in E^{1}$, as follows:

1) $v\cdot q=q$ or $0$ according as $v=s(q)$ or not;

2) $e\cdot q=eq$ or $0$ according as $r(e)=s(q)$ or not;

3) $e^{\ast}\cdot q=\tau_{>1}(q)$ or $0$ according as $q=eq^{\prime}$ or not.

In \cite{C}, Chen shows that under the above action of $L$, $V_{[p]}$ becomes
a simple left $L$-module which we shall call a Chen simple module.

Following Chen, it was shown in \cite{R} that if a vertex $v$ is an infinite
emitter, then the $K$-vector space $\mathbf{S}_{v}$ having as a basis the set
$B=\{p:p$ a path in $E$ with $r(p)=v\}$ can be a made a simple $L_{K}%
(E)$-module where the multilication operation $\cdot$ on elements of $B$ by
elements of $L_{K}(E)$ is induced by conditions 1), 2), 3) above plus the
additional condition that $e^{\ast}\cdot v=0$ for all edges $e\in E^{1}$. In
particular, $\beta^{\ast}\cdot v=0$ in $\mathbf{S}_{v}$ for all paths $\beta$
in $E$.

For any vertex $v$ in $E$, the \textit{tree} of $v$ is denoted by $T_{E}(v)$
and is defined as $T_{E}(v)=\{w\in E^{0}:$ there is a path from $v$ to $w\}$.
We say there is a \textit{bifurcation }at a vertex $v$, if $v$ emits more than
one edge. In a graph $E$, a vertex $v$ is called a \textit{line point} if
there is no bifurcation or a cycle based at any vertex in $T_{E}(v)$. Thus, if
$v$ is a line point, there will be a single finite or infinite line segment
$\mu$ starting at $v$ ($\mu$ could just be $v$) and any other path $\alpha$
with $s(\alpha)=v$ will just be an initial sub-segment of $\mu$. It was shown
in \cite{AMMS} that $v$ is a line point in $E$ if and only if $vL_{K}(E)$ (and
likewise $L_{K}(E)v$) is a simple left (right) ideal. Moreover, the ideal
generated by all the line points in $E$ is the socle of $L_{K}(E)$. If $v$ is
a line point, then it is clear that any $w\in T_{E}(v)$ is also a line point.

Even though the Leavitt path algebra $L_{K}(E)$ may not have the
multiplicative identity $1$, we shall write $L_{K}(E)(1-v)$ to denote the set
$\{x-xv:x\in L_{K}(E)\}$. If $v$ is an idempotent (in particular, a vertex),
we then get a direct decomposition $L_{K}(E)=L_{K}(E)v\oplus L_{K}(E)(1-v)$.

Recall that a ring $R$ is von Neumann regular if for every element $a\in R$,
there is a $b\in R$ such that $a=aba$.

\section{When the graph $E$ contains no cycles}

In this section, we describe all the acyclic graphs $E$ such that every simple
left/right module over $L_{K}(E)\ $is finitely presented.

We begin with a useful Remark.

\begin{remark}
\label{Unitization}Let $E$ be an arbitrary graph and $K$ be any field. Let
$L=L_{K}(E)$ and let $L^{1}=L\times K$, be the unitization of $L$ where the
addition in $L^{1}$ is termwise and the multiplication is given by
$(a,r)(b,s)=(ab+rb+sa,rs)$. Identifying $L$ with the set $\{(a,0):a\in L\}$,
we notice that $L$ is an ideal of $L^{1}$ and that $L^{1}/L\cong K$. So $L$ is
von Neumann regular if and only if $L^{1}$ is.

Also if $M$ is any left $L$-module that is unital (i.e. $LM=M$), then $M$ is
also a left $L^{1}$-module. Because, for any $x\in M$, there is a local unit
$u\in L$ such that $ux=x$ and so, for any $r_{1}\in L^{1}$, we can define
$r_{1}x=r_{1}ux=(r_{1}u)x\in M$. From Proposition 2.4 of \cite{ARS}, every
projective left $L$-module is also a projective $L^{1}$-module.
\end{remark}

\begin{theorem}
\label{No cycles}Let $E$ be an arbitrary acyclic graph, $K$ be any field and
$L=L_{K}(E)$. Then the following are equivalent:

(i) \ \ Every simple left/right $L$-module is finitely presented;

(ii) $\ L=Soc(L)$ is a direct sum of simple left/right ideals;

(iii) $E^{0}$ is the saturated closure of the set of all line points in $E$.
\end{theorem}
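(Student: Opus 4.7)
The plan is to establish the implications (iii)$\Rightarrow$(ii)$\Rightarrow$(i)$\Rightarrow$(iii). The equivalences (ii)$\Leftrightarrow$(iii) are structural consequences of the identification of $\mathrm{Soc}(L)$ with the ideal generated by the line points, which was cited earlier; the real work lies in the step (i)$\Rightarrow$(iii).

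For (iii)$\Rightarrow$(ii), I would first note that saturation only ever adjoins regular vertices, so the hypothesis $E^{0}=\overline{H}$ (with $\overline{H}$ the saturated closure of the set of line points) forces $E$ to be row-finite. The ideal $I$ generated by the line points then coincides with the ideal generated by $\overline{H}$, so the row-finite quotient description gives $L/I\cong L_{K}(E\setminus\overline{H})=0$; hence $L=I=\mathrm{Soc}(L)$, which is by definition a direct sum of simple left (and right) ideals. For (ii)$\Rightarrow$(i), write $L=\bigoplus_{i}Lv_{i}$, where the $v_{i}$ may be taken to be line points by the result of \cite{AMMS}. Any simple left $L$-module $M$ satisfies $v_{i}M\neq 0$ for some $i$ (since $LM=M$); evaluation at a nonzero element of $v_{i}M$ then yields a nonzero map from the simple module $Lv_{i}$ to $M$, which is automatically an isomorphism. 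To see that $Lv_{i}$ is finitely presented, I choose a local unit $f$ with $fv_{i}=v_{i}f=v_{i}$; the idempotent $f-v_{i}$ produces the short exact sequence $0\to L(f-v_{i})\to Lf\to Lv_{i}\to 0$, in which both of the left-hand modules are cyclic (generated by an idempotent) and hence finitely generated projective. The right-handed version is symmetric.

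The main obstacle is (i)$\Rightarrow$(iii), where the plan is to produce, from any vertex lying outside the saturated closure of the line points, an infinite irrational path that avoids the line points, and then invoke a known obstruction. By the discussion in the Introduction, (i) forces $E$ to be row-finite, because an infinite emitter $v$ would already make $\mathbf{S}_{v}$ fail to be finitely presented. Let $\overline{H}$ be the saturated closure of the line points, and suppose, for contradiction, that some $v_{0}\in E^{0}\setminus\overline{H}$ exists. Then $v_{0}$ cannot be a sink (sinks are line points, hence lie in $\overline{H}$), and being regular it cannot have all its edges ranging into $\overline{H}$ (else saturation would place $v_{0}\in\overline{H}$); hence some edge from $v_{0}$ ends at a vertex still outside $\overline{H}$. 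Iterating, I construct an infinite path $p=e_{1}e_{2}\cdots$ with $r(e_{n})\notin\overline{H}$ for every $n$, which is irrational because $E$ is acyclic and which contains no line point because every line point of $E$ lies in $\overline{H}$. The generalisation of the lemma from \cite{AR1} mentioned in the Introduction then guarantees that the Chen simple module $V_{[p]}$ is not finitely presented, contradicting (i). Hence $E^{0}=\overline{H}$, establishing (iii).
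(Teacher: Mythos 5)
Your proposal is correct in substance, but it takes a genuinely different route from the paper, most visibly in the implication (i)$\Rightarrow$(iii). The paper never argues through infinite paths here: it proves (i)$\Rightarrow$(ii) directly by noting that acyclicity makes $L$ (and its unitization) von Neumann regular by \cite{AR}, so by Goodearl's theorem every finitely generated submodule of a projective module is a direct summand; hence each finitely presented simple module is projective, and a Zorn's lemma argument (Proposition 2.27 of \cite{ARS}) yields $L=Soc(L)$. You instead import the Section 3 machinery: Proposition \ref{Infinite emitter not fp} to force row-finiteness, and Corollary \ref{Irrational => no fp} (whose justification rests on \cite{AMT}) applied to an infinite path built entirely outside the saturated closure of the line points, which in an acyclic graph is automatically irrational and misses all line points. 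This is logically sound -- those results do not depend on Theorem \ref{No cycles}, so there is no circularity -- but it trades the paper's short, self-contained regularity argument for heavier external input; on the other hand it has the virtue of foreshadowing exactly the argument the paper later uses in the cyclic case. Your (iii)$\Rightarrow$(ii) via $L_{K}(E\backslash \bar{H})=0$ is a reasonable alternative to the paper's appeal to $Soc(L)\cap E^{0}$ being the saturated closure of the line points.

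One step as written is wrong, though repairable. In (ii)$\Rightarrow$(i) you claim $L=\bigoplus_{i}Lv_{i}$ with the $v_{i}$ line points. This fails in general: for the graph with one vertex emitting two edges into two sinks $w_{1},w_{2}$, condition (iii) holds and $L\cong M_{2}(K)\oplus M_{2}(K)$, yet $Lw_{1}\oplus Lw_{2}$ is a proper direct summand, so $L$ is not the direct sum of the $Lv$ over line points $v$. What \cite{AMMS} actually gives is that $Soc(L)$ is the ideal generated by the line points; it is a direct sum of simple left ideals, each merely isomorphic to some $Lv$ with $v$ a line point. Your argument survives with this correction: every element of that ideal is a $K$-combination of monomials $pq^{\ast}$ with $r(q)$ a line point, so $LM=M$ still forces $vM\neq0$ for some line point $v$ (otherwise $q^{\ast}M=r(q)q^{\ast}M=0$ kills $LM$), and then your map $Lv\rightarrow M$ is an isomorphism, with $Lv$ projective (a direct summand of $L$) and hence finitely presented exactly as your local-unit sequence shows. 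For comparison, the paper's own (ii)$\Rightarrow$(i) avoids identifying the simple modules at all: $L=Soc(L)$ is semisimple, so every maximal submodule is a direct summand and every simple module is projective, hence finitely presented.
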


\begin{proof}
Assume (i). We wish to show that every simple left $L$-module $S$ is
projective. Since $S$ is finitely presented, we can write $S=P/N$ where $P$ is
a projective $L$-module and both $P$ and $N$ are finitely generated. Now $E$
has no cycles. So $L$ (and hence, its unitization $L^{1}$) is von Neuman
regular by \cite{AR} and Remark \ref{Unitization}. Also, as noted above, $P$
is also a projective left module over $L^{1}$. On the other hand, it is known
(see Theorem 1.11, \cite{Good}) that every finitely generated submodule of the
projective $L^{1}$-module $P$ is a direct summand as a left $L^{1}$-module and
hence as a left $L$-module. This means that $N$ is a direct summand of $P$ and
hence $S$ is projective. From the conclusion that every simple left $L$-module
is projective, one can then easily show (applying Zorn's Lemma to the direct
sums of simple left ideals in $L$) that $L$ is a direct sum of simple left
ideals (see Proposition 2.27, \cite{ARS}). A similar argument and conclusion
holds for right $L$-modules. This proves (ii).

Now (ii) $\Leftrightarrow$ (iii) follows from the fact that $Soc(L)$ is the
ideal generated by the set of all line points in $E$ (see \cite{AMMS}) and
that $Soc(L)\cap E^{0}$ is the saturated closure of the hereditary set of all
line points in $E$.

Assume (ii). Since $L=Soc(L)$ is a semisimple left/right $L$-module, every
submodule of $L$ and, in particular, every maximal submodule of $L$ is a
direct summand of $L$. Since $L$ is itself projective (see \cite{ARS}), then
every simple left/right $L$-module is projective and so is finitely presented.
This proves (i).
\end{proof}

\textbf{Remark}: Since a line point is either a sink or a regular vertex, it
clear from the definition of the saturated closure, that Condition (iii) (that
$E^{0}$ is the saturated closure of the set of line points) implies that every
vertex $u\in E^{0}$ is a regular vertex. Hence the graph $E$ must be
row-finite. As we shall see later, a direct argument shows that the same
conclusion holds even when $E$ contains cycles. Also Condition (iii) of
Theorem \ref{No cycles} is equivalent to the statement that given any vertex
$v$ in $E$, there is an integer $n>0$ such that every path from $v$ of length
$>n$ ends at a line point (see Lemma 1.4, \cite{AbRS}).

\section{When the graph $E$ contains cycles \ }

We begin with an easy Lemma.

\begin{lemma}
\label{max ideal}Let $E$ be an arbitrary graph and $L=L_{K}(E)$. For every
maximal left ideal $M$ of $L$, there is exactly one vertex $u\notin M$ such
that $M=(M\cap Lu)\oplus%
%TCIMACRO{\tbigoplus \limits_{v\in E^{0},v\neq u}}%
%BeginExpansion
{\textstyle\bigoplus\limits_{v\in E^{0},v\neq u}}
%EndExpansion
Lv$. Thus every simple left $L$-module will be isomorphic to $Lu/N$ for some
vertex $u$ in $L$ and a maximal submodule $N$ of $Lu$. Similar statements hold
for a maximal right ideal of $L$.
\end{lemma}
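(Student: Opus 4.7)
Plan: The key observation is that the vertices form a set of pairwise orthogonal idempotents and serve as local units, so the left regular module decomposes as $L = \bigoplus_{v \in E^{0}} Lv$. I would reduce the lemma to a careful bookkeeping using this decomposition, and the only genuinely content-carrying step is showing uniqueness of the vertex $u$.

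First I would establish existence: at least one vertex $u$ must lie outside $M$. If not, every vertex, and hence every finite sum of vertices, would lie in $M$; but every element $x \in L$ satisfies $x = ex$ for some finite sum of vertices $e$ (a local unit for $x$), forcing $M = L$, a contradiction. Next I would establish uniqueness, which I expect to be the main obstacle, though it is short once set up correctly. Suppose $u_{1} \neq u_{2}$ are two vertices both outside $M$. By maximality, $M + Lu_{2} = L$, so we can write $u_{1} = m + xu_{2}$ for some $m \in M$ and $x \in L$. Multiplying on the right by $u_{1}$ and using orthogonality $u_{2}u_{1}=0$, we get $u_{1} = u_{1} u_{1} = mu_{1} \in M$, a contradiction.

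With uniqueness in hand, every vertex $v \neq u$ lies in $M$, hence $Lv = L \cdot v \subseteq M$ for all such $v$. Now using $L = Lu \oplus \bigoplus_{v \neq u} Lv$, I would decompose each $m \in M$ uniquely as $m = mu + \sum_{v \neq u} mv$. Since $\bigoplus_{v \neq u} Lv \subseteq M$, the component $mu$ lies in $M \cap Lu$, giving
\[
M = (M \cap Lu) \oplus \bigoplus_{v \in E^{0},\, v \neq u} Lv.
\]

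Finally, the third conclusion is a standard isomorphism-theorem manipulation: since
\[
L/M = \bigl(Lu \oplus \bigoplus_{v \neq u} Lv\bigr)\big/\bigl((M\cap Lu)\oplus \bigoplus_{v \neq u} Lv\bigr) \cong Lu/(M\cap Lu),
\]
and $L/M$ is simple, $N := M \cap Lu$ is a maximal submodule of $Lu$ and every simple left $L$-module arises as $Lu/N$ for a suitable vertex $u$. The statement for maximal right ideals follows verbatim by replacing $Lv$ with $vL$ throughout and using that $v_{1}v_{2} = 0$ for distinct vertices is symmetric.
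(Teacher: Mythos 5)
Your existence argument, the derivation of the displayed decomposition from uniqueness, and the final isomorphism--theorem step are all sound, and the last of these is what the paper actually uses later. The gap is in the uniqueness step: from $u_{1}=m+xu_{2}$ with $m\in M$ you right-multiply by $u_{1}$ to get $u_{1}=mu_{1}$ and conclude $u_{1}\in M$ ``since $m\in M$.'' But $M$ is only a \emph{left} ideal, so it is not closed under right multiplication; $mu_{1}$ need not lie in $M$. This is not a repairable oversight, because the uniqueness claim (and with it the displayed direct-sum decomposition) is false in general. Take $E$ to be a single edge $e$ from $v$ to a sink $w$, so that $L\cong M_{2}(K)$ with $w=e_{11}$, $v=ee^{\ast}=e_{22}$, and let $M$ be the maximal left ideal of matrices annihilating the column vector $(1,1)^{T}$, i.e. $M=\mathrm{span}\{e_{11}-e_{12},\ e_{21}-e_{22}\}$. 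Neither vertex lies in $M$; and with $m=e_{11}-e_{12}\in M$ one has $me_{11}=e_{11}\notin M$, which is exactly where your computation breaks.

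For what it is worth, the paper's own proof stumbles at the same point: the left ideal $Mu\oplus L(1-u)$ does contain $M$, but maximality only forces it to equal $M$ or $L$, and in the example above it equals $L$. What survives, and what the rest of the paper actually needs, is the final assertion: every simple left $L$-module is isomorphic to $Lu/N$ for some vertex $u$ and some maximal submodule $N$ of $Lu$. You can get this directly without any uniqueness claim: if $S$ is simple and $0\neq s\in S$, then $S=Ls=\sum_{v\in E^{0}}Lvs$, so $Lus=S$ for some vertex $u$, and the kernel of the epimorphism $Lu\rightarrow S$, $x\mapsto xs$, is the required maximal submodule of $Lu$. If you keep your write-up, replace the uniqueness step and the direct-sum decomposition by this weaker (but correct) statement.
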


\begin{proof}
Since $L=%
%TCIMACRO{\tbigoplus \limits_{v\in E^{0}}}%
%BeginExpansion
{\textstyle\bigoplus\limits_{v\in E^{0}}}
%EndExpansion
Lv$ and $M\neq L$, there is a $u\in E^{0}\backslash M$. Note that $M\cap
Lu\subset Mu$ and so $M\cap Lu=Mu$, by maximality. Then writing for any $m\in
M$, $m=mu+(m-mu)$, we get $M=Mu\oplus M(1-u)\subset Mu\oplus L(1-u)$. By
maximality, $M=Mu\oplus L(1-u)=(M\cap Lu)\oplus%
%TCIMACRO{\tbigoplus \limits_{v\in E^{0}\backslash\{u\}}}%
%BeginExpansion
{\textstyle\bigoplus\limits_{v\in E^{0}\backslash\{u\}}}
%EndExpansion
Lv$.
\end{proof}

For convenience sake, hereafter we shall consider only left $L$-modules. By
symmetry, all our results also hold for right $L$-modules.

If the simple module $S=Lu/N$ is, in addition finitely presented, then $S\cong
P/M^{\prime}$ where $P$ is a finitely generated projective $L$-module and
$M^{\prime}$ is finitely generated. In that case, by Schanuel's Lemma
\cite{La}, $N$ will also be finitely generated. So, for an arbitrary graph
$E$, checking whether all the simple left $L_{K}(E)$-modules are finitely
presented or not is equivalent to checking whether, for every vertex $u$,
every maximal submodule of $L_{K}(E)u$ is finitely generated or not.

It was shown in \cite{R} that every infinite emitter $v$ gives rise to a
simple $L$-module $(\mathbf{S}_{v},\cdot)$ which has as a $K$-basis the set of
all the paths in $E$ that end at $v$ and $\mathbf{S}_{v}$ has the $L$-module
operation $\cdot$ as indicated in the Preliminaries section. The next
proposition shows that, for any infinite emitter $v$, the simple module
$\mathbf{S}_{v}$ is not finitely presented.

\begin{proposition}
\label{Infinite emitter not fp}Let $E$ be an arbitrary graph. If $v\in E^{0}$
is an infinite emitter, then the corresponding simple left module
$(\mathbf{S}_{v},\cdot)$ over $L=L_{K}(E)$ is not finitely presented.
\end{proposition}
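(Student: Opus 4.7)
The plan is to prove the contrapositive by showing that the kernel $N_{0}$ of the natural surjection $Lv\twoheadrightarrow \mathbf{S}_{v}$ cannot be finitely generated. The action rules give $p\cdot v=p$ for every path $p$ ending at $v$, so $v$ generates $\mathbf{S}_{v}$ and $\mathbf{S}_{v}\cong Lv/N_{0}$. Since $Lv$ is a finitely generated projective $L$-module, Schanuel's Lemma (exactly as invoked in the paragraph after Lemma \ref{max ideal}) will then force $\mathbf{S}_{v}$ to fail to be finitely presented once $N_{0}$ is shown to be non-finitely-generated.

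To produce an abundance of elements of $N_{0}$, I would exploit the infinite emission at $v$: for every edge $f$ with $s(f)=v$, the element $f^{*}$ lies in $Lv$ (because $f^{*}=f^{*}s(f)=f^{*}v$) and satisfies $f^{*}\cdot v=0$ by the extra rule defining $\mathbf{S}_{v}$. Hence $\{f^{*}:f\in s^{-1}(v)\}$ is an infinite subset of $N_{0}$, and the goal is to show that no finite collection can generate $N_{0}$ as a left ideal of $L$.

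Assume for contradiction that $N_{0}=\sum_{i=1}^{n}Lg_{i}$. Using $g_{i}=g_{i}v$, each $g_{i}$ can be written as $g_{i}=\sum_{j}k_{ij}\alpha_{ij}\beta_{ij}^{*}$ with $r(\alpha_{ij})=r(\beta_{ij})=v$. Let $F$ be the finite set of edges appearing in any $\alpha_{ij}$ or $\beta_{ij}$, and pick $f\in s^{-1}(v)\setminus F$, which exists because $v$ is an infinite emitter. Then $f^{*}=\sum_{i}y_{i}g_{i}$ for some $y_{i}\in L$, and right-multiplication by $f$ yields
\[
r(f)=f^{*}f=\sum_{i}y_{i}(g_{i}f).
\]

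The crux is to verify that $g_{i}f=0$ for every $i$. For a monomial $\alpha_{ij}\beta_{ij}^{*}$ with $|\beta_{ij}|\geq 1$, writing $\beta_{ij}=e_{1}\cdots e_{m}$ gives $\beta_{ij}^{*}f=e_{m}^{*}\cdots e_{2}^{*}(e_{1}^{*}f)$; since $e_{1}\in F$ and $f\notin F$ we have $e_{1}\neq f$, so $e_{1}^{*}f=0$ by CK-1 and the whole term dies. For the surviving monomials with $\beta_{ij}=v$, the hypothesis $g_{i}\in N_{0}$ reads $\sum_{j:\beta_{ij}=v}k_{ij}\alpha_{ij}=0$ in $\mathbf{S}_{v}$; because distinct paths ending at $v$ are $K$-linearly independent basis elements of $\mathbf{S}_{v}$, this is a genuine identity of $K$-linear combinations of distinct paths, and right-multiplication by $f$ in $L$ preserves it. Hence $g_{i}f=0$, so $r(f)=0$, contradicting the fact that $r(f)$ is a nonzero idempotent in $L$. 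The main technical obstacle is exactly this last bookkeeping step: one must recognize that the vanishing inside $\mathbf{S}_{v}$ is really an identity of formal sums of paths in $L$ that survives post-multiplication by the edge $f$ chosen outside the finite support $F$.
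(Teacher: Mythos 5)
Your proof is correct and takes essentially the same route as the paper's: reduce to showing the kernel $N$ of $Lv\twoheadrightarrow\mathbf{S}_{v}$ is not finitely generated, observe (via linear independence of the paths ending at $v$ in $\mathbf{S}_{v}$) that the generators can be taken as combinations of monomials $\alpha\beta^{\ast}$ with $\beta$ of positive length starting at $v$, and then use infinite emission to pick $f\in s^{-1}(v)$ avoiding the finitely many initial edges so that $\beta^{\ast}f=0$. The only differences are cosmetic --- you derive the contradiction from $f^{\ast}\in N$ via $r(f)=f^{\ast}f=0$ where the paper uses $ff^{\ast}\in N$ and $ff^{\ast}=(ff^{\ast})^{2}=0$, and your stated condition on the monomials should read $s(\beta_{ij})=v$ rather than $r(\beta_{ij})=v$ (a slip that does not affect the argument).
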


\begin{proof}
Suppose, on the contrary, $S_{v}$ is finitely presented. Writing
$\mathbf{S}_{v}=L\cdot v$, consider the exact sequence
\[
0\longrightarrow M\longrightarrow L\overset{\theta}{\longrightarrow}%
\mathbf{S}_{v}\longrightarrow0
\]
where $\theta(a)=a\cdot v$ for all $a\in L$. By Lemma \ref{max ideal}, $M=(%
%TCIMACRO{\tbigoplus \limits_{u\in E^{0},u\neq v}}%
%BeginExpansion
{\textstyle\bigoplus\limits_{u\in E^{0},u\neq v}}
%EndExpansion
Lu)\oplus N$ where $N=M\cap Lv$. Restricting $\theta$ to $Lv$ we get an exact
sequence%
\[
0\longrightarrow N\longrightarrow Lv\overset{\theta}{\longrightarrow
}\mathbf{S}_{v}\longrightarrow0,
\]
where $N$ is a finitely generated left ideal by our supposition. Let
$x_{1},\cdot\cdot\cdot,x_{k}$ be the generators of $N$. For each
$r=1,\cdot\cdot\cdot,k$, we can write $x_{r}=%
%TCIMACRO{\tsum \limits_{i=1}^{m_{r}}}%
%BeginExpansion
{\textstyle\sum\limits_{i=1}^{m_{r}}}
%EndExpansion
k_{i}\alpha_{i}\beta_{i}^{\ast}$ where $m_{r}$ is some positive integer and,
for all $i=1,\cdot\cdot\cdot,m_{r}$, $v=s(\beta_{i})$ and $r(\alpha
_{i})=r(\beta_{i})$. Now every non-zero term in the summation for $x_{r}$ must
involve a ghost path $\beta_{i}^{\ast}$. Because, otherwise, re-indexing the
terms, we can write $x_{r}=%
%TCIMACRO{\tsum \limits_{i=1}^{s}}%
%BeginExpansion
{\textstyle\sum\limits_{i=1}^{s}}
%EndExpansion
k_{i}\alpha_{i}+%
%TCIMACRO{\tsum \limits_{j=s+1}^{m_{r}}}%
%BeginExpansion
{\textstyle\sum\limits_{j=s+1}^{m_{r}}}
%EndExpansion
k_{j}\alpha_{j}\beta_{j}^{\ast}$ where, for all $i=1,\cdot\cdot\cdot,s$, we
assume that the real paths $\alpha_{i}$ are all different, that $k_{i}\neq0$
and that $r(\alpha_{i})=v$\ (as $x_{r}v=x_{r}$). Then, since $\beta_{j}^{\ast
}\cdot v=0$ for all $j$, we obtain $0=\theta(x_{r})=x_{r}\cdot v=(%
%TCIMACRO{\tsum \limits_{i=1}^{r}}%
%BeginExpansion
{\textstyle\sum\limits_{i=1}^{r}}
%EndExpansion
k_{i}\alpha_{i})\cdot v=%
%TCIMACRO{\tsum \limits_{i=1}^{r}}%
%BeginExpansion
{\textstyle\sum\limits_{i=1}^{r}}
%EndExpansion
k_{i}\alpha_{i}$ in $\mathbf{S}_{v}$, a contradiction as the paths $\alpha
_{i}$ are $K$-independent. Thus each $x_{r}$ is a $K$-linear combination of
finitely many monomials of the form $\alpha_{i}\beta_{i}^{\ast}$. So $N=%
%TCIMACRO{\tsum \limits_{i=1}^{n}}%
%BeginExpansion
{\textstyle\sum\limits_{i=1}^{n}}
%EndExpansion
L\alpha_{i}\beta_{i}^{\ast}$, where $n$ is some positive integer and for each
$i=1,\cdot\cdot\cdot,n$, $s(\beta_{i})=v$ and $r(\alpha_{i})=r(\beta_{i})$.
Since $v$ is an infinite emitter, we can choose an edge $f$ with $s(f)=v$
which is not the initial edge of any of the paths $\beta_{i}$ and so
$\beta_{i}^{\ast}f=0$ for $i=1,\cdot\cdot\cdot,n$. Now $ff^{\ast}\in N$ (as
$ff^{\ast}\cdot v=0$) and so we can write $ff^{\ast}=%
%TCIMACRO{\tsum \limits_{i=1}^{n}}%
%BeginExpansion
{\textstyle\sum\limits_{i=1}^{n}}
%EndExpansion
b_{i}\alpha_{i}\beta_{i}^{\ast}$ where $b_{i}\in L$. But then $ff^{\ast
}=ff^{\ast}ff^{\ast}=$ $%
%TCIMACRO{\tsum \limits_{i=1}^{n}}%
%BeginExpansion
{\textstyle\sum\limits_{i=1}^{n}}
%EndExpansion
b_{i}\alpha_{i}\beta_{i}^{\ast}ff^{\ast}=0$, a contradiction. Hence $N$ is not
finitely generated and consequently $\mathbf{S}_{v}$ is not finitely presented.
\end{proof}

\begin{corollary}
\label{row-finiteness}Let $E$ be an arbitrary graph. If every simple left
module over $L_{K}(E)$ is finitely presented, then $E$ must be a row-finite graph.
\end{corollary}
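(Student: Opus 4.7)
The plan is to argue by contrapositive and invoke the preceding proposition almost directly. Suppose that $E$ is not row-finite. By definition of row-finiteness, this means there exists some vertex $v \in E^{0}$ with $s^{-1}(v)$ infinite, i.e.\ $v$ is an infinite emitter in the sense defined in the Preliminaries.

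Once such a vertex $v$ has been produced, I would appeal to the construction from \cite{R} recalled in the Preliminaries, which associates to every infinite emitter $v$ a simple left $L_K(E)$-module $(\mathbf{S}_v,\cdot)$ whose $K$-basis consists of all paths in $E$ ending at $v$. Proposition \ref{Infinite emitter not fp} then asserts that this particular simple module $\mathbf{S}_v$ is not finitely presented. This directly contradicts the hypothesis that every simple left $L_K(E)$-module is finitely presented, and so completes the proof.

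There is essentially no obstacle: all of the real content has been packaged into Proposition \ref{Infinite emitter not fp}, whose proof exhibits an explicit non-finitely-generated kernel in a presentation of $\mathbf{S}_v$ (built using an edge $f$ at $v$ avoiding the finitely many initial edges of the $\beta_i$'s that would supposedly generate the kernel). The corollary itself is just the contrapositive restatement: failure of row-finiteness produces an infinite emitter, which in turn produces a specific simple module violating the finite presentation hypothesis.
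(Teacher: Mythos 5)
Your proof is correct and is exactly the argument the paper intends: the corollary is stated immediately after Proposition \ref{Infinite emitter not fp} precisely because failure of row-finiteness yields an infinite emitter $v$, whose associated simple module $\mathbf{S}_v$ is not finitely presented by that proposition. Nothing further is needed.
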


In view of Corollary \ref{row-finiteness}, we assume hereafter that $E$ is a
row-finite graph.

\begin{remark}
Let $p$ be an infinite path. If the corresponding Chen simple module $V_{[p]}$
is projective, then it is clearly finitely presented. In this case, Chen
\cite{C} showed that path $p$ will be tail equivalent to the infinite line
segment $\bullet\longrightarrow\bullet\longrightarrow\bullet\longrightarrow
\cdot\cdot\cdot$. Thus $V_{[p]}$ will be projective if and only if the path
$p$ contains a line point.
\end{remark}

It was shown in (Proposition 4.1, \cite{AR1}) that if $E$ is a finite graph,
then for any infinite irrational path $p$, the Chen simple module $V_{[p]}$ is
not finitely presented. We wish to generalize this result to the case when $E$
is a row-finite graph. Specifically we show that if $p$ is an infinite
irrational path in a row-finite graph $E$ such that no vertex on $p$ is a line
point, then the Chen simple module $V_{[p]}$ is not finitely presented. To
accomplish this, we fix some notation and make some initial useful remarks.

Let $E$ be a row-finite graph and let $p=e_{1}e_{2}\cdot\cdot\cdot e_{n}%
\cdot\cdot\cdot$ be an infinite irrational path in $E$ such that no vertex on
the path $p$ is a line point. This means that there is an infinite sequence of
integers $n_{1}<\cdot\cdot\cdot\cdot<n_{i}<n_{i+1}\cdot\cdot\cdot$ such that,
for each $i$, there is a bifurcation at $s(e_{n_{i}})=v_{n_{i}}$. For
convenience, we shall call the $n_{i}$ \textit{bifurcating integers}. Let
$v=s(e_{1})=s(p)$. Writing the simple module $V_{[p]}=L\cdot p$, we have an
exact sequence of $L$-modules%
\[
0\longrightarrow N\longrightarrow Lv\overset{\theta}{\longrightarrow}%
V_{[p]}\longrightarrow0\qquad\qquad\qquad\qquad(\ast)
\]
where $\theta(a)=a\cdot p$ for all $a\in Lv$. Observe that if $f$ is a
bifurcating edge with $s(f)=v_{n_{i}}$, then $\theta(f^{\ast}e_{n_{i}-1}%
^{\ast}\cdot\cdot\cdot e_{1}^{\ast})=0$ and so, for each $i$, the left ideal
\[
L_{n_{i}}=%
%TCIMACRO{\tsum \limits_{f\in s^{-1}(v_{n_{i}}),f\neq e_{n_{i}}}}%
%BeginExpansion
{\textstyle\sum\limits_{f\in s^{-1}(v_{n_{i}}),f\neq e_{n_{i}}}}
%EndExpansion
Lf^{\ast}e_{n_{i}-1}^{\ast}\cdot\cdot\cdot e_{1}^{\ast}\subseteq N.
\]
It is an easy argument to show that $%
%TCIMACRO{\tsum \limits_{i\geq1}}%
%BeginExpansion
{\textstyle\sum\limits_{i\geq1}}
%EndExpansion
L_{n_{i}}=%
%TCIMACRO{\tbigoplus \limits_{i\geq1}}%
%BeginExpansion
{\textstyle\bigoplus\limits_{i\geq1}}
%EndExpansion
L_{n_{i}}$. Indeed, suppose%
\[
a_{n_{1}}+\cdot\cdot\cdot+a_{n_{k}}=0\qquad\qquad\qquad(\#)
\]
where $a_{n_{i}}\in L_{n_{i}}$. Denoting $\mu_{n_{k}}=e_{1}\cdot\cdot\cdot
e_{n_{k}-1}e_{n_{k}-1}^{\ast}\cdot\cdot\cdot e_{1}^{\ast}$, observe that, for
any $i=1,\cdot\cdot\cdot\cdot,k$, $a_{n_{i}}\mu_{n_{k}}=a_{n_{k}}$ or $0$
according as $i=k$ or not. Then multiplying the equation $(\#)$ on the right
by $\mu_{n_{k}}$, we get $a_{n_{k}}=0$. Proceeding like this, we obtain that
$a_{n_{i}}=0$ for all $i=1,...,k$, thus showing that \ our sum is direct. So,
$%
%TCIMACRO{\tbigoplus \limits_{i\geq1}}%
%BeginExpansion
{\textstyle\bigoplus\limits_{i\geq1}}
%EndExpansion
L_{n_{i}}\subseteqq N$.

To establish that $V_{[p]}$ is not finitely presented, all we need is to show
that
\[
N=%
%TCIMACRO{\tbigoplus \limits_{i=1}^{\infty}}%
%BeginExpansion
{\textstyle\bigoplus\limits_{i=1}^{\infty}}
%EndExpansion
L_{n_{i}}.\qquad\qquad\qquad\qquad(\#\#)
\]
However, in \cite{AMT}, the authors investigate projective resolutions of
simple $L_{K}(E)$-modules and, in doing so, they proceed as we have done to
consider a simple module\ $V_{[p]}$ induced by infinite irrational path $p$
not containing a line point and indeed show, in our notation, that $N=%
%TCIMACRO{\tbigoplus \limits_{i=1}^{\infty}}%
%BeginExpansion
{\textstyle\bigoplus\limits_{i=1}^{\infty}}
%EndExpansion
L_{n_{i}}$ (see Lemmas 2.14 and 2.15 and Corollary 2.6 in \cite{AMT}). Since
our proof of $(\#\#)$ is essentially the same as given in \cite{AMT}, we omit
the proof and refer to \cite{AMT} for a justification of $(\#\#)$. A
consequence of $(\#\#)$ is the following.

\begin{corollary}
\label{Irrational => no fp}Let $E$ be a row-finite graph. If $p$ is an
infinite irrational path in $E$ containing no line points, then the Chen
simple module $V_{[p]}$ is \ not finitely presented.
\end{corollary}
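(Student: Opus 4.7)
The plan is to exploit the exact sequence $(\ast)$ together with the decomposition $(\#\#)$ already set up above. With these in hand, the corollary reduces to showing that the infinite direct sum $\bigoplus_{i\geq 1} L_{n_{i}}$ is not finitely generated as a left $L$-module.

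First, I verify that each summand $L_{n_{i}}$ is nonzero. Since the path $p$ contains no line point, the bifurcation integers $n_{i}$ form an infinite sequence; at each $v_{n_{i}}=s(e_{n_{i}})$ there exists, by the very definition of bifurcation, an edge $f\in s^{-1}(v_{n_{i}})$ with $f\neq e_{n_{i}}$. The element $f^{\ast}e_{n_{i}-1}^{\ast}\cdots e_{1}^{\ast}$ is then a nonzero monomial in the standard normal form of $L_{K}(E)$ and lies in $L_{n_{i}}$, so $L_{n_{i}}\neq 0$ for every $i\geq 1$.

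Next, I invoke the elementary observation that an infinite direct sum $\bigoplus_{i\in I}M_{i}$ of nonzero modules over any ring cannot be finitely generated: any finite set of elements has nonzero components in only finitely many coordinates, so the submodule it generates is contained in $\bigoplus_{i\in F}M_{i}$ for some finite $F\subsetneq I$. Applied to $N=\bigoplus_{i\geq 1}L_{n_{i}}$, this shows that $N$ is not a finitely generated left $L$-module.

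Finally, I conclude via Schanuel's Lemma, exactly as noted by the author in the paragraph following Lemma \ref{max ideal}: since $Lv$ is a finitely generated projective left $L$-module and $V_{[p]}\cong Lv/N$, the module $V_{[p]}$ is finitely presented if and only if $N$ is finitely generated. Since the latter fails, $V_{[p]}$ is not finitely presented. The only real obstacle in this line of argument is $(\#\#)$ itself, namely the nontrivial inclusion $N\subseteq\bigoplus_{i\geq 1}L_{n_{i}}$; once that is accepted, as the author does by citing \cite{AMT}, the remaining steps are straightforward bookkeeping.
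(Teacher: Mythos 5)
Your proposal is correct and follows essentially the same route as the paper: both rest on the decomposition $N=\bigoplus_{i\geq1}L_{n_{i}}$ imported from \cite{AMT}, and you simply make explicit the two steps the paper leaves implicit (each $L_{n_{i}}$ is nonzero because $f^{\ast}e_{n_{i}-1}^{\ast}\cdots e_{1}^{\ast}=(e_{1}\cdots e_{n_{i}-1}f)^{\ast}\neq0$, and an infinite direct sum of nonzero modules is never finitely generated), before closing with the Schanuel's Lemma reduction already recorded after Lemma \ref{max ideal}. No gaps.
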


Next, we recall a pre-order $\geq$ \ that was introduced in \cite{AR1} (see
also \cite{AAJZ1}) among the cycles in the graph $E$.

\begin{definition}
\label{Poset} (\cite{AR1}) Given two cycles $c,c^{\prime}$ in $E$, define
$c\geq c^{\prime}$ if there is a path from a vertex on $c$ to a vertex on
$c^{\prime}$.
\end{definition}

As a consequence of Corollary \ref{row-finiteness} and Corollary
\ref{Irrational => no fp}, we derive the following Proposition which
summarises the necessary conditions on the graph $E$ in order that every
simple left module over $L=L_{K}(E)$ is finitely generated.

\begin{proposition}
\label{disjoint cycles}Let $E$ be an arbitrary graph that contains cycles. If
every simple left $L$-module is finitely presented, then $E$ satisfies the
following properties:

(i) \ \ $E$ is row-finite;

(ii) \ \ Distinct cycles in $E$ have no common vertex;

(iii) \ The set $C$ of all cycles in $E$ is a non-empty artinian partially
ordered set under the relation $\geq$;

(iv) \ Every infinite path in $E$ either contains a line point or is tail
equivalent to a rational path.
\end{proposition}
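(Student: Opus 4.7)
The proof splits into the four claims, with (i) being exactly Corollary~\ref{row-finiteness} and (iv) being the contrapositive of Corollary~\ref{Irrational => no fp}: an infinite path $p$ that neither contains a line point nor is tail equivalent to a rational path is irrational in the sense needed, so $V_{[p]}$ would be non-finitely presented, contradicting the hypothesis. The substance is in (ii) and (iii), and in each case the strategy is identical: assume the graph-theoretic condition fails and construct an infinite path $p$ which (a) is not tail equivalent to any rational path and (b) has no vertex on it a line point; then invoke Corollary~\ref{Irrational => no fp} to produce the desired contradiction.

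For (ii), suppose toward contradiction that two distinct cycles $c_1, c_2$ share a vertex $v$. After rebasing both at $v$, I form the infinite path
\[
p \;=\; c_1 \, c_2 \, c_1^{2} \, c_2 \, c_1^{3} \, c_2 \, c_1^{4} \, c_2 \,\cdots,
\]
with strictly increasing $c_1$-run lengths. Thus no tail of $p$ is of the form $ggg\cdots$ for any finite closed path $g$, so $p$ is not tail equivalent to any rational path. Every vertex of $p$ lies on $c_1$ or on $c_2$ and therefore has a cycle based at itself, so it is not a line point. Corollary~\ref{Irrational => no fp} then forbids $V_{[p]}$ from being finitely presented, the desired contradiction. (Non-emptiness of $C$ in (iii) is automatic from the standing hypothesis that $E$ contains cycles.)

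For (iii), I first verify antisymmetry (so that $\geq$ is actually a partial order on $C$). If $c \neq c'$ with $c \geq c' \geq c$, pick a path $\alpha$ from a vertex of $c$ to a vertex of $c'$ and a path $\beta$ going back; concatenating full cycle-traversals of $c$ and $c'$ connected by $\alpha$ and $\beta$, with non-eventually-periodic multiplicities (and small partial cycle-segments to align endpoints with $s(\alpha)$ and $s(\beta)$), yields an infinite path not tail equivalent to a rational path, all of whose vertices either lie on a cycle or lie on $\alpha$ or $\beta$ and so reach a cycle — ruling out line points, and giving a contradiction as before. For the artinian condition, suppose $c_1 > c_2 > c_3 > \cdots$ is an infinite strictly descending chain. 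Pick paths $\alpha_i$ from $c_i^{0}$ to $c_{i+1}^{0}$ and form an infinite path by concatenating (partial and full) traversals of $c_1$, then $\alpha_1$, then of $c_2$, then $\alpha_2$, and so on. By (ii) the $c_i$ have pairwise disjoint vertex sets, so the resulting path visits infinitely many distinct vertices and cannot be tail equivalent to any rational path, whose tail is confined to the finitely many vertices of a single closed path. Every vertex on the path either lies on some $c_i$ or on some $\alpha_i$ and therefore has $c_{i+1}$ in its tree, so is not a line point; Corollary~\ref{Irrational => no fp} closes (iii).

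The main obstacle is purely the bookkeeping of the path constructions: aligning sources and ranges across concatenations of cycles and connecting paths, and exhibiting exponent patterns that manifestly preclude eventual periodicity. Once the constructions are in place, the verification that no vertex on the resulting path is a line point is immediate because, by design, every such vertex either sits on a cycle or has a direct path to one.
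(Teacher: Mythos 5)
Your proposal is correct and follows essentially the same route as the paper: every part is reduced to Corollary~\ref{Irrational => no fp} by exhibiting an infinite path, not tail-equivalent to a rational one and containing no line points, built from the offending configuration (two cycles through a common vertex, a failure of antisymmetry, or an infinite descending chain of cycles). The only cosmetic difference is that you verify antisymmetry by a direct path construction where the paper simply deduces it from (ii), and you spell out the non-tail-equivalence and no-line-point checks that the paper leaves implicit.
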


\begin{proof}
(i) Follows from Corollary \ref{row-finiteness}.

(ii) If the same vertex $v$ is the base of two different cycles $g,h$, then we
get an infinite irrational path $p=ghg^{2}h^{2}\cdot\cdot\cdot g^{n}h^{n}%
\cdot\cdot\cdot$. Then Corollary \ref{Irrational => no fp} implies that the
simple module $V_{[p]}$ is not finitely presented, a contradiction.

(iiii) Now Condition (ii) implies that the relation $\geq$ is anti-symmetric.
Thus $\geq$ is a partial order. If there is an infinite descending chain of
distinct cycles in $C$, then this chain can be expanded to an infinite
irrational path $p$ in $E$. This leads \ to a contradiction since the
corresponding simple module $V_{[p]}$ is, by Corollary
\ref{Irrational => no fp}, not finitely presented. Thus $(C,\geq)$ is an
artinian partially ordered set.

(iv) Corollary \ref{Irrational => no fp} implies that if an infinite path in
$E$ is not rational, then it must contain a line point.
\end{proof}

In preparation for proving our main theorem, we recall the following
Definition of a "hedgehog" graph from \cite{AP}.

\begin{definition}
\label{Hedgehog}Suppose $E$ is a row-finite graph and $H$ is a non-empty
hereditary saturated subset of vertices in $E$.

Let $F(H)=\{$paths $\alpha=e_{1}\cdot\cdot\cdot e_{n}:n\geq1,r(e_{i})\notin H$
for $i=1,\cdot\cdot\cdot,n-1$ and $r(e_{n})\in H\}$.

Let $\bar{F}(H)=\{\bar{\alpha}:\alpha\in F(H)\}$.

Then the "hedgehog" graph $_{H}E=(_{H}E^{0},_{H}E^{1},s^{\prime},r^{\prime})$
is defined as follows:

(1) $_{H}E^{0}=H\cup F(H)$.

(2) $_{H}E^{1}=\{e\in E^{1}:s(e)\in H\}\cup\bar{F}(H)$.

(3) For every $e\in E^{1}$with $s(e)\in H$, $s^{\prime}(e)=s(e)$ and
$r^{\prime}(e)=r(e)$.

(4) For every $\bar{\alpha}\in\bar{F}(H)$, $s^{\prime}(\bar{\alpha})=\alpha$
and $r^{\prime}(\bar{\alpha})=r(\alpha)$.
\end{definition}

It was shown in (Lemma 1.2, \cite{AP}) that, for a row-finite graph $E$, if
$I$ is a graded ideal of $L_{K}(E)$ with $I\cap E^{0}=H$, then $I\cong
L_{K}(_{H}E)$. Thus, in particular, the graded ideal $I$ is a ring with local units.

Following \cite{AAPS}, we call a ring $R$ with local units
\textit{categorically left noetherian }if submodules of finitely generated
left $R$-modules are again finitely generated. It was shown in \cite{AAPS}
that, for any index set $\Lambda$, the matrix ring $M_{\Lambda}(K[x,x^{-1}])$
is categorically noetherian. Also, as a special case of Theorem 3.9 of
\cite{AAPS}, one obtains that, for a graph $E$, $L_{K}(E)\cong M_{\Lambda
}(K[x,x^{-1}])$ if and only if $E$ contains a unique cycle $c$ without exits,
$T_{E}(v)\cap c^{0}\neq\emptyset$ for every vertex $v$.and every infinite path
in $E$ is tail-equivalent to the infinite rational path $ccc\cdot\cdot\cdot$ .

The next Proposition plays a key role in proving our main theorem.

\begin{proposition}
\label{Extension by matrix rings} Suppose $E$ is a row-finite graph and $M$ is
a graded ideal of $L=L_{K}(E)$ such that $L/M\cong M_{\Lambda}K[x,x^{-1}]$
where $\Lambda$ is an arbitrary index set. If every simple left $M$-module is
finitely presented, then every simple left $L$-module is finitely presented.
\end{proposition}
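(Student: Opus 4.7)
My plan is to dichotomize on whether $M$ annihilates a given simple left $L$-module $S$. Since $MS$ is an $L$-submodule of $S$, either $MS = S$ or $MS = 0$.

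In the case $MS = S$, I would first verify that $S$ is simple as an $M$-module: given a nonzero $M$-submodule $T \subseteq S$, simplicity of $S$ as $L$-module forces $LT = S$, whence $S = MS \subseteq MT \subseteq T$. By hypothesis, $S$ is then finitely presented over $M$, say by $0 \to K \to Me \to S \to 0$ with $e \in M$ an idempotent and $K$ a finitely generated $M$-submodule. Three observations upgrade this to an $L$-presentation: $Le = Me$ as left $L$-modules (both are the cyclic $L$-ideal generated by $e$, using $Le \subseteq LM \subseteq M$), so $Le$ is cyclic projective over $L$; the surjection $Me \to S$ is automatically $L$-linear (because $LMe \subseteq Me$ and the $L$-action on $S$ extends the $M$-action); and the $M$-generators of $K$ also generate $K$ as an $L$-module (the $L$-module they generate contains $\sum Mk_i = K$ and, by $L$-stability of $K$, lies inside $K$). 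Hence $S$ is finitely presented over $L$.

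The case $MS = 0$ is more delicate. Here $S$ is a simple module over $L/M \cong M_{\Lambda}(K[x,x^{-1}])$, which is categorically left noetherian by the cited result of \cite{AAPS}, so $S$ is finitely presented over $L/M$. Setting $H = M \cap E^0$ and choosing a vertex $w$ on the unique exit-free cycle $c$ of $E \setminus H$ (guaranteed by the structure theorem for $M_\Lambda(K[x,x^{-1}])$ as a Leavitt path algebra), I would present $S$ as $Lw/N$ with $Mw \subseteq N$; the isomorphism $Lw/Mw \cong (L/M)\bar w$ identifies $N/Mw$ with a finitely generated $L/M$-submodule, whose lifted generators $y_1,\dots,y_k$ include a cycle relation $f(c) = c^n + a_{n-1}c^{n-1} + \cdots + a_0 w$ for the irreducible $f$ attached to $S$. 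I would then enlarge the generating set by the finite collection $\{ff^* : f \in s^{-1}(v),\ v \in c^0,\ r(f) \in H\}$, which is finite by row-finiteness of $E$ and finiteness of $c^0$. The plan is that these generators suffice: modulo them, $c^*f(c) \equiv 0$ forces $c^* \equiv g(c)$ for a polynomial $g$; then $cc^* \equiv w$ via the identity $cg(c) - w = -a_0^{-1}f(c)$; the CK-2 relation $w = cc^* + \sum_f ff^*$ is then consistent with each exit projection $ff^*$ already being a generator; and each monomial $pq^* \in Mw$ reduces to $0$ by induction on the length of $q$, using CK-1 ($f^*c = 0$ when $f \neq c$), the identity $f^* = f^*(ff^*) \in L\,ff^*$, and the cycle relation to collapse cycle traversals.

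The main obstacle I anticipate is executing this absorption step in full generality: the computation is clean for a loop of length one, but cycles of length greater than one require iterating the cycle relation across successive cycle vertices and bookkeeping partial traversals; monomials in $Mw$ with right-$c^*$ factors momentarily leave the left ideal spanned by the chosen generators and must be re-expressed carefully; and vertices $u \in E^0 \setminus H$ not on $c$ must be reduced to the cycle vertex $w$ using the finitely many minimal paths from $u$ to $c^0$, which are finite by K\"onig's lemma applied to the tree pruned at $c^0 \cup H$ (using row-finiteness and the absence of infinite paths off $c$ in $E \setminus H$). Once this absorption is carried out, $N$ is finitely generated as an $L$-module, so $S = Lw/N$ is finitely presented over $L$, completing the proof.
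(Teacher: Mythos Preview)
Your Case $MS=S$ is correct and matches the paper's argument essentially verbatim.

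In Case $MS=0$ you take a genuinely different route. The paper does not pin the presentation to a vertex on the cycle; it uses the vertex $v$ supplied by Lemma~\ref{max ideal}, proves that $T_{E\setminus H}(v)$ is finite, and then argues that $Lv\cap M$ is \emph{itself} finitely generated, by the elements $e_{jk}^{\ast}\gamma_j^{\ast}$ where $\gamma_j$ runs over the finitely many paths in $E\setminus H$ from $v$ ending off $c$ at a vertex that emits an edge $e_{jk}$ into $H$ (and claims $Lv\cap M=0$ when $v\in c^0$). The polynomial relation $f(c)$ plays no role in the paper's proof. Your idea of choosing $w\in c^0$ and using $f(c)\in N$ to collapse the powers $(c^{\ast})^m$ is more robust: when vertices of $c$ emit edges into $H$ in the ambient graph $E$ (nothing in the hypotheses forbids this), the left ideal $Lw\cap M$ is \emph{not} finitely generated, because paths from $w$ can wind around $c$ arbitrarily many times before exiting, so any strategy that tries to bound $N$ by first bounding $Lw\cap M$ in isolation must fail. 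Your key identity $c^{\ast}-g(c)=a_0^{-1}c^{\ast}f(c)\in Lf(c)$, combined with the vanishing $e^{\ast}\alpha^{\ast}c=0$ for each exit $e$ (since the first edge of $c$ after $\alpha$ differs from $e$), yields by induction on $m$ that $e^{\ast}\alpha^{\ast}(c^{\ast})^m\in Lf(c)+Le^{\ast}\alpha^{\ast}$, and this is exactly what is needed to conclude $N=\sum_i Ly_i+\sum_{(\alpha,e)}Le^{\ast}\alpha^{\ast}$ is finitely generated.

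Two small repairs to your sketch: first, the elements $ff^{\ast}$ with $s(f)\in c^0\setminus\{w\}$ do not lie in $Lw$; replace them by $f^{\ast}\alpha^{\ast}$ (equivalently $\alpha ff^{\ast}\alpha^{\ast}$), where $\alpha$ is the initial segment of $c$ from $w$ to $s(f)$ of length less than $|c|$, and then there are still only finitely many such generators. Second, your final anticipated obstacle about reducing vertices $u\in E^0\setminus H$ off $c$ does not arise: since $c$ has no exits in $E\setminus H$, every path in $E$ starting at $w$ stays on $c$ until it enters $H$, so the analysis of $Mw$ never encounters such vertices.
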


\begin{proof}
Let $S$ be a simple left $L$-module.

Case 1: Suppose $MS=S$. Then $S$ is a simple $M$-module and, by hypothesis, is
finitely presented as an $M$-module. Let $H=M\cap E^{0}$. Since $M$ is a
graded ideal, $M\cong L_{K}(_{H}E)=L^{\prime}$ (\cite{AP}). By Lemma
\ref{max ideal}, $S\cong L^{\prime}u/A$ for some vertex $u\in(_{H}E)^{0}$
where $A$ is a finitely generated maximal $L^{\prime}$-submodule of
$L^{\prime}u$. Under the isomorphism $L^{\prime}\cong M$, let $u$ map to an
idempotent $\epsilon$ in $M$ and $A$ map to a submodule $B$ of $M$. Since $M$
has local units, $L\epsilon=M\epsilon$, $B$ is a maximal $L$-submodule of
$L\epsilon$ and $S\cong L\epsilon/B$. As $L$ is projective as a left
$L$-module (see \cite{ARS}), $L\epsilon$ is a cyclic projective summand of $L$
and $B$ is a finitely generated $L$-submodule. Hence $S\cong L\epsilon/B$ is
finitely presented as a left $L$-module.

Case 2: Suppose $MS=0$ so $S\cong L/Y$ for some maximal left ideal $Y\supseteq
M$. From Lemma \ref{max ideal}, it is clear that there is a vertex $v\notin Y$
such that $Y=(Lv\cap Y)\oplus L(1-v)$ and $S\cong Lv/N$, where $N=Lv\cap Y$.
If $v$ is a sink, then $Lv$ will be simple and a direct summand of $L$
(\cite{AMMS}) and so $S\cong Lv$ is projective and finitely presented. Suppose
$v$ is not a sink. Since $M$ is a two-sided ideal, $M=(Lv\cap M)\oplus
(L(1-v)\cap M)$ and clearly, $(Lv\cap M)\subset N=Lv\cap Y$. Let $H=M\cap
E^{0}$. Now $L_{K}(E\backslash H)\cong L/M\cong M_{\Lambda}(K[x,x^{-1}])$ for
some index set $\Lambda$. This means, by Theorem 3.9 of \cite{AAPS} (see also
Theorem 4.2.12, \cite{AAS}) that $E\backslash H$ has a unique cycle $c$
without exits based at a vertex $u$, $T_{E}(v)\cap c^{0}\neq\emptyset$ for
every vertex $v$ and every infinite path in $E\backslash H$ is tail-equivalent
to the the rational path $ccc\cdot\cdot\cdot$ . Now $(N+M)/M$ is a submodule
of the cyclic submodule $(Lv+M)/M$ and, since $L/M\cong M_{\Lambda}%
(K[x,x^{-1}])$ is categorically noetherian (see Lemma 1.3, \cite{AAPS}),
$N/(Lv\cap M)\cong(N+M)/M$ is finitely generated. Thus $N=Lx_{1}+\cdot
\cdot\cdot+Lx_{r}+(Lv\cap M)$ where the $x_{i}\in Lv$. So to prove that $S$ is
finitely presented, we need only to show that $Lv\cap M$ is finitely
generated. To start with, we claim that $T_{E\backslash H}(v)$ is a finite
set. To justify this, we follow an argument that is embedded in the proof of
Proposition 3.6 of \cite{AAPS}. Suppose, on the contrary, $T_{E\backslash
H}(v)$ is an infinite set. Then $v\notin c^{0}$, as $c^{0}$ is a hereditary
set due to $c$ having no exits and is finite. Since $v$ is a regular vertex,
there is then an edge $e_{1}$ with $s(e_{1})=v$, $r(e_{1})=v_{1}$ such that
$T_{E\backslash H}(v_{1})$ is an infinite set. Clearly, $v_{1}\notin c^{0}$
and $v_{1}$ is a regular vertex. Repeating this process, we obtain an infinite
path in $E\backslash H$ such that no vertex on this path lies on $c$. This
contradicts the fact the every infinite path in $E\backslash H$ is
tail-equivalent to the rational path $ccc\cdot\cdot\cdot$ . Thus
$T_{E\backslash H}(v)$ is a finite set.We now distinguish two cases.

Case A: Suppose $v\notin c^{0}$. Since $E\backslash H$ is row-finite, since
every infinite path in $E\backslash H$ is tail-equivalent to the rational path
$ccc\cdot\cdot\cdot$ and since $T_{E\backslash H}(v)$ is a finite set, the
number of paths $\alpha$ in $E\backslash H$ satisfying $s(\alpha)=v$ and
$r(\alpha)\notin c^{0}$ is finite. Among these finitely many paths, let
$\gamma_{1},\cdot\cdot\cdot,\gamma_{m}$ be the listing of all those paths with
the property that $s(\gamma_{j})=v$ and $r(\gamma_{j})=u_{j}$ such that there
is at least one $e\in s^{-1}(u_{j})$ with $r(e)\in H$. \ Here we use the
convention that $u_{j}=v$ if $\gamma_{j}$ has length $0$. For each
$j=1,\cdot\cdot\cdot,m$, let $\{e_{jk:}:k=1,\cdot\cdot\cdot,l_{j}\}$ be the
set of all the edges $e_{jk}\in s^{-1}(u_{j})$ such that $r(e_{jk})\in H$. Now
each element of $Lv\cap M$ is a $K$-linear combination of monomials of the
form $pq^{\ast}$ where $s(q)=v$ and $r(q)=r(p)\in H$. It is then clear that,
each such path $q$ is of the form $q=\gamma_{j}e_{jk}q^{\prime}$ for some $j$
and $k$, where $q^{\prime}$ is a suitable path. This means that $pq^{\ast}\in$
$Le_{jk}^{\ast}\gamma_{j}^{\ast}$. As $e_{jk}^{\ast}\gamma_{j}^{\ast}\in
Lv\cap M$, for all $j,k$, we then conclude that
\[
Lv\cap M=%
%TCIMACRO{\tsum \limits_{j=1}^{m}}%
%BeginExpansion
{\textstyle\sum\limits_{j=1}^{m}}
%EndExpansion%
%TCIMACRO{\tsum \limits_{k=1}^{l_{j}}}%
%BeginExpansion
{\textstyle\sum\limits_{k=1}^{l_{j}}}
%EndExpansion
Le_{jk}^{\ast}\gamma_{j}^{\ast}.
\]
Consequently, $N=%
%TCIMACRO{\tsum \limits_{i=1}^{r}}%
%BeginExpansion
{\textstyle\sum\limits_{i=1}^{r}}
%EndExpansion
Lx_{i}+(Lv\cap M)$ is finitely generated. This shows that the simple module
$S=Lv/N$ is finitely presented.

Case B: Suppose $v\in c^{0}$. In this case $Lv\cap M=\{0\}$, as there are no
paths connecting $v$ to a vertex in $H$. Then $N\cong N/(Lv\cap M)$ is
finitely generated and so $Lv/N$ is finitely presented.
\end{proof}

\bigskip

We are now ready to prove the main theorem of this section.

\begin{theorem}
\label{Non Acyclic case}Suppose $E$ is an arbitrary graph that contains cycles
and $L=L_{K}(E)$. Then the following conditions are equivalent:

(1) \ \ Every simple left $L$-module is finitely presented;

(2) \ (a) $E$ is row-finite, (b) the set $C$ of all the cycles in $E$ is a
non-empty artinian partially ordered set under the defined relation $\geq$,
(c) every infinite path in $E$ either contains a line point or is tail
equivalent to a rational path and (d) for every proper hereditary saturated
set $H$ of vertices containing all the line points, $E\backslash H$ contains
cycles without exits but no line points;

(3) $L$ is the union of a smooth well-ordered ascending chain of graded ideals%
\[
0\leq I_{1}<\cdot\cdot\cdot<I_{\alpha}<I_{\alpha+1}<\cdot\cdot\cdot
\qquad\qquad\qquad(\alpha<\tau)\qquad\qquad(\ast\ast\ast)
\]
where $\tau$ is a suitable ordinal, $I_{1}=Soc(L)$ (which may be $0$) and, for
each $\alpha\geq1$, $I_{\alpha+1}/I_{\alpha}\cong M_{\Lambda_{\alpha}%
}(K[x,x^{-1}])$ where $\Lambda_{\alpha}$ is an index set that depends on
$\alpha$.
\end{theorem}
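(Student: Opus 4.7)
The plan is to establish the cyclic chain of implications $(1)\Rightarrow(2)\Rightarrow(3)\Rightarrow(1)$. For $(1)\Rightarrow(2)$, conditions (a), (b), and (c) are provided directly by Proposition \ref{disjoint cycles}, so only (d) requires work. I would fix a proper hereditary saturated $H$ containing every line point of $E$ and examine the quotient graph $E\setminus H$. To rule out acyclicity of $E\setminus H$: condition (c) would force every infinite path in $E\setminus H$ to contain a line point of $E$, but all such points lie in $H$; so $E\setminus H$ admits no infinite paths, and König's lemma (via row-finiteness) makes every descendant tree in $E\setminus H$ finite. Any leaf of such a tree is either a sink in $E$ (hence a line point in $H$) or a vertex whose $E$-edges all land in $H$ (absorbed by saturation), so $H=E^0$, contradicting properness. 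Hence $E\setminus H$ has a cycle; taking $c$ minimal in the artinian poset from (b), an exit of $c$ would, by cycle-disjointness (b) and condition (c), start a path that enters a strictly smaller cycle, contradicting minimality. Finally, if $v$ were a line point of $E\setminus H$, the line from $v$ either hits a sink $w$ of $E\setminus H$ (absorbed into $H$ by saturation, contradiction) or yields an infinite irrational path $p$ none of whose vertices is a line point of $E$; Corollary \ref{Irrational => no fp} then makes $V_{[p]}$ a simple $L$-module that fails to be finitely presented, contradicting (1).

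For $(2)\Rightarrow(3)$, I would construct the chain by transfinite recursion. Set $I_1=Soc(L)$ and write $H_\alpha=I_\alpha\cap E^0$ throughout. Given $I_\alpha\neq L$, condition (d) applied to $H_\alpha$ produces an exitless cycle $c$ in $E\setminus H_\alpha$; define $I_{\alpha+1}$ as the ideal of $L$ generated by $H_\alpha\cup c^0$. Then $I_{\alpha+1}/I_\alpha$ is the ideal of $L/I_\alpha\cong L_K(E\setminus H_\alpha)$ generated by the hereditary set $c^0$, which by \cite{AP} is isomorphic to $L_K({}_{c^0}(E\setminus H_\alpha))$. This hedgehog has a unique exitless cycle $c$ reached by every vertex, with every infinite path tail-equivalent to $ccc\cdots$, so Theorem 3.9 of \cite{AAPS} yields $I_{\alpha+1}/I_\alpha\cong M_{\Lambda_\alpha}(K[x,x^{-1}])$. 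At limit ordinals $\lambda$ set $I_\lambda=\bigcup_{\beta<\lambda}I_\beta$ to ensure continuity. Since each successor step strictly shrinks the poset of remaining cycles, the artinianness (b) combined with (d) forces $H_\tau=E^0$ and hence $I_\tau=L$ at some ordinal $\tau$.

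For $(3)\Rightarrow(1)$, I would prove by transfinite induction on $\alpha$ that every simple left $I_\alpha$-module is finitely presented, viewing each $I_\alpha$ as a Leavitt path algebra of its hedgehog graph via \cite{AP}. The base $I_1=Soc(L)$ is a direct sum of simple left ideals, so every simple $I_1$-module is isomorphic to a cyclic projective summand and is finitely presented. The successor case is exactly Proposition \ref{Extension by matrix rings} applied to $I_{\alpha+1}$ with graded ideal $I_\alpha$ and quotient $M_{\Lambda_\alpha}(K[x,x^{-1}])$. For a limit $\lambda$, let $T$ be a simple $I_\lambda$-module with local unit $v\in I_\beta$ for some $\beta<\lambda$; because $I_\beta$ is an ideal of $I_\lambda$, any nonzero $I_\beta$-submodule $T'\subseteq T$ satisfies $I_\lambda T'\subseteq I_\beta T'=T'$ (using $v$ as a local unit for $T'$), which makes $T'$ an $I_\lambda$-submodule and transfers $I_\lambda$-simplicity to $I_\beta$-simplicity; the induction hypothesis presents $T$ as $I_\beta v/N'$ with $N'$ finitely generated, and $I_\lambda v=I_\beta v$ (by heredity of $H_\beta$) together with the fact that a finite set of $I_\beta$-generators of $N'$ is also a set of $I_\lambda$-generators yields a finite presentation over $I_\lambda$. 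The same local-unit device transports the conclusion to $L=\bigcup I_\alpha$: every simple $L$-module has a local unit in some $I_\alpha$, is simple over $I_\alpha$, is finitely presented there by the induction, and the presentation lifts to $L$.

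The two delicate points I anticipate requiring the most care are (i) the no-line-points half of condition (d) in $(1)\Rightarrow(2)$, which demands a careful case split on whether the line from a putative line point of $E\setminus H$ is finite or infinite and uses Corollary \ref{Irrational => no fp} for the irrational case, and (ii) the limit ordinal step in $(3)\Rightarrow(1)$, where one must verify that simplicity, finite generation of the syzygy, and local-unit identifications all transfer cleanly between the $I_\beta$- and $I_\lambda$-module structures before the whole argument can be pushed through to $L$ itself.
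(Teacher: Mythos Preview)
Your proposal is correct and follows essentially the same cyclic argument $(1)\Rightarrow(2)\Rightarrow(3)\Rightarrow(1)$ as the paper, relying on the same key tools (Proposition~\ref{disjoint cycles}, Corollary~\ref{Irrational => no fp}, and Proposition~\ref{Extension by matrix rings}) at the same junctures and the same transfinite construction and induction for the last two implications. The only notable difference is in $(1)\Rightarrow(2)$(d): you show directly via a K\"onig's-lemma/saturation argument that $E\setminus H$ must contain a cycle, whereas the paper first rules out line points in $E\setminus H$ and then invokes Theorem~\ref{No cycles} on $L_K(E\setminus H)\cong L/I(H)$ to reach the same conclusion.
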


\begin{proof}
Assume (1). We need only to prove Condition 2(d), as Conditions 2(a) - 2(c)
follow from Proposition \ref{disjoint cycles}. Let $H\neq E^{0}$ be a
hereditary saturated subset of vertices containing all the line points in $E$.
First we show that $E\backslash H$ contains no line points. Suppose, by way of
contradiction, $v$ is a line point in $(E\backslash H)^{0}=E^{0}\backslash H$.
Since $E$ is row-finite and $H$ contains all the sinks in $E$, $v$ is not a
sink in $E^{0}\backslash H$. So $T_{E\backslash H}(v)$ consists of the
infinite set $\{v=v_{1},v_{2},\cdot\cdot\cdot,v_{n},\cdot\cdot\cdot\}$ of
vertices having no bifurcations in $E\backslash H$ and they define an infinite
path%
\[
\underset{v_{1}}{\bullet}\longrightarrow\underset{v_{2}}{\bullet}\cdot
\cdot\cdot\cdot\underset{v_{n}}{\bullet}\longrightarrow\cdot\cdot\cdot\text{
.}%
\]
First note that, in $E$, none of these $v_{i}$ can be a base of a cycle $c$,
because otherwise $v_{i}$ will be a base of $c$ in $E\backslash H$, a
contradiction. Also none of the $v_{i}$ can be a line point in $E$ as $H$
contains all the line points. Thus these vertices $v_{i}$ define an infinite
irrational path in $E$ not containing any line points. This is impossible in
view of Corollary \ref{Irrational => no fp}. Hence $E\backslash H$ contains no
line points. As every simple left module over $L_{K}(E\backslash H)\cong
L/I(H)$ is finitely presented, Theorem \ref{No cycles} (iii) then implies that
$E\backslash H$ must contain a cycle. Moreover, by Proposition
\ref{disjoint cycles}, the cycles in $E\backslash H$ form a non-empty artinian
partially ordered set $C$ under the relation $\geq$. Since $E\backslash H$
does not contain any line points, Proposition \ref{disjoint cycles}(iv)
implies that any minimal element in $C$ will be a cycle without exits.\ 

Assume (2). Let $I_{1}$ be the ideal generated by all the line points in $E$.
Then $I_{1}$ is a graded ideal and is the socle of $L$ \cite{AMMS}. Note that
$I_{1}$ may be zero, but $I_{1}\neq L$ since $E$ contains cycles. Suppose
$\alpha>1$ and that the graded ideals $I_{\beta}$ have been defined for all
$\beta<\alpha$ with the stated properties. If $\alpha$ is a limit ordinal,
define $I_{\alpha}=\cup_{\beta<\alpha}I_{\beta}$. Suppose $\alpha=\beta+1$ and
that $I_{\beta}\neq L$. Let $H=I_{\beta}\cap E^{0}$. Now $E\backslash H$
satisfies the Conditions (2)(a) - (d) and, in particular, $E\backslash H$
contains cycles without exits. Since $L_{K}(E\backslash H)\cong L/I_{\beta}$,
identifying $L/I_{\beta}$ with $L_{K}(E\backslash H)$, define $I_{\beta
+1}/I_{\beta}$ to be the (graded) ideal generated by the vertices in a single
cycle without exits in $E\backslash H$. By (Proposition 3.7(iii),
\cite{AAPS}), $I_{\beta+1}/I_{\beta}$ is isomorphic to a matrix ring of the
form $M_{\Lambda_{\beta}}(K[x,x^{-1}])$ where $\Lambda_{\beta}$ is an
arbitrary index set. Proceeding like this and applying transfinite induction,
we obtain the transfinite chain $(\ast\ast\ast)$ of graded ideals, where the
successive quotients are matrix rings of appropriate size over $K[x,x^{-1}]$.
This proves (3).

Assume (3). We are given $L$ is the union of the transfinite chain $(\ast
\ast\ast)$ of graded ideals $I_{\alpha}$ with the stated properties. First of
all observe that, by Lemma \ref{max ideal}, every simple left $L$-module is
isomorphic to $Lv/N$ for some vertex $v$ in $E$. Now the vertex $v$ in $E$
belongs to some graded ideal $I_{\alpha}$ and each $I_{\alpha}$ is a ring with
local units as $I_{\alpha}\cong L_{k}(_{H_{\alpha}}E)$ where $H_{\alpha
}=I_{\alpha}\cap E^{0}$. This means that the $L$-submodules of $I_{\alpha}$
coincide with the $I_{\alpha}$-submodules of $I_{\alpha}$. \ Consequently,
every simple left $L$-module is a simple left $I_{\alpha}$-module for suitable
$\alpha$. So we wish to show, by\ transfinite induction on $\alpha$, that
every simple left $I_{\alpha}$-module is finitely presented as a simple left
$L$-module. If $\alpha=1$, this is immediate since $I_{1}$, being the socle
$L$, is a direct sum of projective simple left ideals of $L$.

Assume $\alpha\geq2$ and that, for all $\beta<\alpha$, every simple left
$I_{\beta}$-module is a finitely presented simple left $L$-module. Suppose
$\alpha$ is a limit ordinal so that $I_{\alpha}=%
%TCIMACRO{\tbigcup \limits_{\beta<\alpha}}%
%BeginExpansion
{\textstyle\bigcup\limits_{\beta<\alpha}}
%EndExpansion
I_{\beta}$. Since $I_{\alpha}$ is a graded ideal, $I_{\alpha}\cong
L_{K}(_{H_{\alpha}}E)$ where $H_{\alpha}=I_{\alpha}\cap E^{0}$. Now\ any
simple left $I_{\alpha}$-module $S$ is of the form $I_{\alpha}v/N$ where
$v\in(_{H_{\alpha}}E)^{0}$. Note that $v\in H_{\alpha}$ or $v\in F(H_{\alpha
})$ (see Defintion \ref{Hedgehog}). In either case, since $v\in I_{\beta}$ for
some $\beta<\alpha$, $S$ becomes a simple left $I_{\beta}$-module and so, by
induction, is a finitely presented simple left $L$-module. Suppose
$\alpha=\beta+1$ for some $\beta\geq1$. As before the graded ideal $I_{\alpha
}\cong L_{K}(_{H_{\alpha}}E)$ where $H_{\alpha}=I_{\alpha}\cap E^{0}$. Let $S$
be a simple left $I_{\alpha}$-module. Since $I_{\beta+1}/I_{\beta}\cong
M_{\Lambda_{\alpha}}(K[x,x^{1}])$ for some index set $\Lambda_{\alpha}$ and
since every simple $I_{\beta}$-module is finitely presented as an $I_{\beta}%
$-module (also as an $L$-module), we appeal to Proposition
\ref{Extension by matrix rings} to conclude that $S$ is finitely presented as
a simple left $I_{\alpha}$-module and hence is a finitely presented simple
left $L$-module. Applying transfinite induction, we reach the desired
conclusion. This proves (1).
\end{proof}

When $E$ is a finite graph, Conditions 2(a) and 2(c) of the above theorem are
immediate. Condition 2(d) is also automatically satisfied. To see this,
suppose $H$ is a proper hereditary saturated subset of vertices in the finite
graph $E$ containing all the line points. Since every vertex in $E$ is
regular, $E\backslash H$ contains no sinks (and no line points). This means
(since $E$ is finite) that every path in $E\backslash H$ eventually ends at a
cycle. In particular, Condition 2(c) holds. Also it is easy to see that the
condition that $\geq$ is antisymmetric (as part of Condition 2(b)) is
equivalent to stating that different cycles in $E$ have no common vertex. Thus
we derive the following characterization, proved in \cite{AR1}, of a Leavitt
path algebra $L$ of a finite graph $E$ over which every simple left $L$-module
is finitely presented.

\begin{corollary}
\label{Finite case}(\cite{AR1}) Let $E$ be any finite graph, $K$ be any field
and let $L=L_{K}(E)$. Then every simple left $L$-module is finitely presented
if and only if every vertex in $E$ is the base of at most one cycle.
\end{corollary}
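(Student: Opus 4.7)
The plan is to specialize Theorems \ref{No cycles} and \ref{Non Acyclic case} to the finite case. I first record that the condition ``every vertex in $E$ is the base of at most one cycle'' is equivalent to ``distinct cycles in $E$ are vertex-disjoint'': a vertex common to two cycles can be used as a new base for each of them, producing two distinct cycles based at that vertex.

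For the forward implication, assume every simple left $L$-module is finitely presented. If $E$ is acyclic the condition is vacuous; otherwise Proposition \ref{disjoint cycles}(ii) gives vertex-disjoint cycles directly.

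For the converse, I split on whether $E$ has cycles. If $E$ is acyclic, I apply Theorem \ref{No cycles}: it suffices to show that $E^{0}$ equals the saturated closure of the set of line points. In a finite acyclic graph, sinks are line points, and an induction on the length of the longest forward path out of a vertex $v$ (finite by acyclicity) shows that $v$ is regular and that all of its out-neighbours already lie in the closure, so $v$ itself lies in the closure by saturation. If $E$ contains cycles, I verify conditions 2(a)--(d) of Theorem \ref{Non Acyclic case}. Part (a), row-finiteness, is automatic. In (b), the cycle set $C$ is finite and nonempty, so artinianness is automatic, and antisymmetry of $\geq$ is exactly the disjoint-cycles hypothesis. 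For (c), an infinite path in a finite graph revisits some vertex infinitely often, producing a closed subpath and hence a cycle $c$; once the path enters $c^{0}$, it either remains there (yielding tail-equivalence to $ccc\cdots$) or takes an exit, and artinianness of the finite poset $C$ forces only finitely many such descents before the path is trapped on a single cycle.

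The main work is (d). Fix a proper hereditary saturated subset $H$ of vertices containing all line points of $E$. Since $H$ is saturated, no vertex outside $H$ has all of its out-edges landing in $H$, so $E\setminus H$ has no sinks (the sinks of $E$ are line points and hence already in $H$). A finite sink-free graph must contain cycles and cannot contain a line point, since the tree of such a line point would be a finite line terminating at a sink. Cycles of $E\setminus H$ are cycles of $E$ and so inherit vertex-disjointness. Now let $c$ be a minimal element of the finite cycle poset of $E\setminus H$. If $c$ had an exit $e$ in $E\setminus H$, walking forward from $r(e)$ in the finite sink-free $E\setminus H$ must produce a simple cycle $c'$. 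If $c'\cap c^{0}=\emptyset$, then $c'\neq c$ and $c>c'$ in the poset, contradicting minimality. Otherwise disjointness forces $c'=c$; then the walk hits $c^{0}$ for the first time at some vertex $w$, and splicing the edge $e$, the portion of the walk from $r(e)$ to $w$ (which avoids $c^{0}$), and the arc of $c$ from $w$ back to $s(e)$ produces a simple cycle based at $w$ that is distinct from $c$, again contradicting disjointness. Hence (d) holds, and Theorem \ref{Non Acyclic case} yields the conclusion. The only genuinely subtle step is the no-exit argument in (d); the remaining verifications reduce directly to finiteness combined with the disjoint-cycles hypothesis.
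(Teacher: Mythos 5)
Your proof is correct and follows essentially the same route as the paper: one specializes Theorems \ref{No cycles} and \ref{Non Acyclic case} to finite graphs, where row-finiteness and artinianness are automatic and antisymmetry of $\geq$ translates into vertex-disjointness of cycles. You are in fact somewhat more careful than the paper's own derivation, which declares Condition 2(d) ``automatically satisfied'' without spelling out the minimal-cycle-has-no-exit splicing argument (or the acyclic case via the saturated closure) that you supply explicitly.
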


EXAMPLE: As an example of a graph satisfying the conditions of Theorem
\ref{Non Acyclic case}, consider a graph $E^{\prime}$ consisting of infinitely
many loops $c_{i}$ based at vertices $v_{i}$ for $i=1,2,\cdot\cdot\cdot$ suh
that, for each $i$, there is an edge $e_{i}$ with $s(e_{i})=v_{i+1}$ and
$r(e_{i})=v_{i}$. In addition, there is a vertex $w$ and an edge $e$ with
$s(e)=v_{1}$ and $r(e)=w$. Thus $w$ is a sink and the only line point in $E$.
\ Let $H_{0}=\{w\}$ and, for each $n\geq1$, let $H_{n}=\{w,v_{1},\cdot
\cdot\cdot,v_{n}\}$. Clearly, the proper non-empty hereditary saturated
subsets of $(E^{\prime})^{0}$ are just the sets $H_{n}$, $n\geq0$. For each
$n\geq0$, the quotient graph $E^{\prime}\backslash H_{n}$ contains a cycle
without exits and has no line points. Moreover the cycles $c_{i}$ in
$E^{\prime}$ form an artinian partially ordered set under the relation $\geq$.

\bigskip

It is now clear that Theorem 1.1 follows from Theorems \ref{No cycles} and
\ref{Non Acyclic case}.

\section{A Corner-Tree Phenomena}

This section contains some preliminary results which will be used in the next
section. We explore the conditions needed for the corner $vLv$, where $v$ is
vertex, of a Leavitt path algebra $L=L_{K}(E)$ to have a various ring
properties. These seem to be governed by appropriate graph properties of the
tree $T_{E}(v)$. \ Our focus is when $v$ is an acyclic vertex (see definition
below) .

In the following, we make the convention that if $u,w\in T_{E}(v)$, then
$T_{E}(v)$ contains all the edges in the paths connecting $u$ to $w$. Thus
$T_{E}(v)$ is a complete subgraph of $E$.

We shall be using the following generalization of the "hedgehog" graph given
in Definition \ref{Hedgehog} to arbitrary graphs (see \cite{RT}).

\textbf{Result (a).} Let $E$ be an arbitrary graph and let $I$ a graded ideal
of $L_{K}(E)$ with $I\cap E^{0}=H$ and $S=\{v\in B_{H}:v^{H}\in I\}$. Then
Theorem 6.1 of \cite{RT} shows that $I=I(H,S)$ is isomorphic to a Leavitt path
algebra $L_{K}(\bar{E}(H,S))$ where the graph $\bar{E}(H,S)$ is defined as follows:

Let $F_{1}=\{$paths $\alpha=e_{1}\cdot\cdot\cdot e_{n}:n\geq1,r(e_{i})\notin
H$ for $i=1,\cdot\cdot\cdot,n-1$ and $r(e_{n})\in H\}$.

Let $F_{2}=\{$paths $\alpha:r(\alpha)\in S$ and length of $\alpha\geq1\}.$

For $i=1,2$, let $\bar{F}_{i}=\{\bar{\alpha}:\alpha\in F_{i}\}.$

Then $(\bar{E}(H,S))^{0}:=H\cup S\cup F_{1}\cup F_{2};$

$(\bar{E}(H,S))^{1}:=\{e\in E^{1}:s(e)\in H\}\cup\{e\in E^{1}:s(e)\in
S,r(e)\in H\}\cup\bar{F}_{1}\cup\bar{F}_{2},$

and $s,r$ are extended to $\bar{E}(H,S)$ by setting $s(\bar{\alpha})=\alpha$
and $r(\bar{\alpha})=r(\alpha)$ for all $\alpha\in F_{1}\cup F_{2}$.

Thus every graded ideal of $L_{K}(E)$, being isomorphic to a Leavitt path
algebra, is possessed with local units.

We begin with the following result from \cite{CO}.

\begin{lemma}
\label{Colak}(\cite{CO}) Let $\bar{H}$ be the saturated closure of a
hereditary set $H$ of vertices. If a vertex $w\in\bar{H}$ and $w$ lies on a
closed path, then $w\in H$.
\end{lemma}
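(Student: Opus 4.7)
The plan is to prove Lemma \ref{Colak} by transfinite induction on the stage at which $w$ enters the saturated closure. Recall that $\bar{H}$ can be built as $\bar{H}=\bigcup_{\alpha}H_{\alpha}$, where $H_{0}=H$, $H_{\alpha+1}=H_{\alpha}\cup\{v:v\text{ is regular with }r(s^{-1}(v))\subseteq H_{\alpha}\}$, and $H_{\lambda}=\bigcup_{\beta<\lambda}H_{\beta}$ at a limit ordinal $\lambda$. A routine induction on $\alpha$ shows each $H_{\alpha}$ is hereditary: if $v$ is adjoined at stage $\alpha+1$, every edge out of $v$ lands in $H_{\alpha}$, and any further descendant of $v$ stays in $H_{\alpha}$ by the previous inductive step.

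Given $w\in\bar{H}$ lying on a closed path $c=e_{1}\cdots e_{n}$, I let $\alpha$ be the least ordinal with $w\in H_{\alpha}$; the goal is to conclude $\alpha=0$, hence $w\in H$. The limit case is ruled out by the minimality of $\alpha$, so I may assume $\alpha=\beta+1$ is a successor. Then $w\notin H_{\beta}$, which forces $w$ to be a regular vertex satisfying $r(s^{-1}(w))\subseteq H_{\beta}$.

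After cyclically rotating $c$ I may assume $s(e_{1})=w$. If $n=1$, then $r(e_{1})=w\in H_{\beta}$, contradicting the minimality of $\alpha$. If $n\geq 2$, the vertex $u=s(e_{2})=r(e_{1})$ lies in $H_{\beta}$ by the saturation condition at $w$, and $u$ sits on the rotated closed path $e_{2}\cdots e_{n}e_{1}$, so the inductive hypothesis applied to $u$ yields $u\in H$. Since $H$ is hereditary and $e_{2}\cdots e_{n}$ is a path from $u$ to $w$, I conclude $w\in H$, closing the induction. The only real subtlety is the rotation trick, which allows the induction to pivot from $w$ to another vertex $u$ on the same closed path that was adjoined at a strictly earlier stage; beyond that, no step appears to present a serious obstacle.
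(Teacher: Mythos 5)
The paper does not prove this lemma; it is quoted from \cite{CO} without argument, so there is no in-paper proof to compare against. Your transfinite-induction argument on the stage at which $w$ enters the saturated closure is correct and complete: the successor step correctly uses that only regular vertices are adjoined by saturation, the rotation reduces to the vertex $u=r(e_{1})\in H_{\beta}$ on the same closed path, the inductive hypothesis places $u$ in $H$, and hereditarity of $H$ then returns $w\in H$ along the path $e_{2}\cdots e_{n}$. This is the standard proof of the result, and no step is missing.
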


As a consequence of the above Lemma, we obtain the following.

\begin{lemma}
\label{Cycles}Let $E$ be an arbitrary graph and $v\in E^{0}$. If the ideal $I$
generated by $v$ contains a closed path $c$ then $c^{0}\subset T_{E}(v)$.
\end{lemma}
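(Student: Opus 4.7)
The plan is to identify the vertex part $H := I \cap E^{0}$ of the ideal $I$ with the saturated hereditary closure $\overline{T_{E}(v)}$ of the tree of $v$, to show that every vertex on the closed path $c$ must lie in $H$, and then to invoke Lemma~\ref{Colak} to push these vertices back into $T_{E}(v)$ itself.

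For the identification $H = \overline{T_{E}(v)}$, note first that $I$ is a graded ideal (being generated by a single vertex). For any path $\alpha$ with $s(\alpha)=v$ and $r(\alpha)=w$, we have $\alpha = v\alpha \in Lv \subseteq I$, and hence $w = \alpha^{\ast}\alpha \in I$. Thus $T_{E}(v) \subseteq H$, and since $H$ is saturated and hereditary, $\overline{T_{E}(v)} \subseteq H$. The reverse inclusion follows from the standard fact that the ideal generated by a saturated hereditary set $X$ meets $E^{0}$ in exactly $X$: applying this to $X = \overline{T_{E}(v)}$ yields a graded ideal containing $v$, hence containing $I$, which gives $H \subseteq \overline{T_{E}(v)}$.

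The next step is to verify $c^{0} \subseteq H$. Write $c = e_{1}\cdots e_{n}$ with base $u = s(e_{1}) = r(e_{n})$. Iterated use of the CK-1 relations telescopes $c^{\ast}c$ down to the single vertex $u$, giving $u \in I$. For any other vertex $v_{i} = s(e_{i})$ on $c$, setting $\beta_{i} = e_{1}\cdots e_{i-1}$ yields the rotated closed path $c_{i} = \beta_{i}^{\ast}\, c\, \beta_{i} = e_{i}\cdots e_{n}e_{1}\cdots e_{i-1}$, based at $v_{i}$ and lying in $I$ (since $c \in I$). The same telescoping now gives $v_{i} = c_{i}^{\ast}c_{i} \in I$, so $c^{0} \subseteq H = \overline{T_{E}(v)}$.

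Finally, each $w \in c^{0}$ lies on the closed path $c$ (or a rotation of it based at $w$) and belongs to $\overline{T_{E}(v)}$, so Lemma~\ref{Colak} applied to the hereditary set $T_{E}(v)$ forces $w \in T_{E}(v)$, establishing $c^{0} \subseteq T_{E}(v)$. There is no real obstacle here: the only bookkeeping is the CK-1 telescoping in the middle step, and all of the conceptual weight is carried by Lemma~\ref{Colak}.
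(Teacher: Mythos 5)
Your proof is correct and follows the same route as the paper: identify $I\cap E^{0}$ with the saturated closure of the hereditary set $T_{E}(v)$, observe that the vertices of $c$ land in that closure, and finish with Lemma~\ref{Colak}. The paper states this in one line; you have merely supplied the routine verifications (the CK-1 telescoping and the two inclusions), all of which check out.
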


\begin{proof}
This follows immediately if one observes that $T_{E}(v)$ is a hereditary set
and its saturated closure is $I\cap E^{0}$.
\end{proof}

It is known (see \cite{AMMS}) that a vertex $v$ is a line point (i.e.,
$T_{E}(v)$ is a single straight line segment) if and only if the corner
$vLv\cong K$. Likewise, by examining the representation of elements in $vLv$,
it is clear\ that $T_{E}(v)$ is a cycle without exits if and only if the
corner $vLv\cong K[x,x^{-1}]$. We explore below similar connections between
$T_{E}(v)$ and $vLv$.

\begin{proposition}
\label{acyclic vertex}Let $E$ be an arbitrary graph. Then the following are
equivalent for any vertex $v$ in $E$:

(i) The corner $vLv$ is von Neumann regular;

(ii) $T_{E}(v)$ contains no cycles;

(iii) The ideal $I$ generated by $v$ is von Neumann regular.
\end{proposition}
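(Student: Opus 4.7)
The plan is to establish $(ii) \Leftrightarrow (iii) \Leftrightarrow (i)$ by proving the two equivalences separately. Set $H = I \cap E^0$; since $T_E(v)$ is hereditary, $H$ is its saturated closure, and Result (a) gives an isomorphism $I \cong L_K(\bar E(H,S))$ for a suitable $S \subseteq B_H$. I treat $(ii) \Leftrightarrow (iii)$ as a graph-theoretic statement about this presentation of $I$, and $(i) \Leftrightarrow (iii)$ as a reflection of the fact that $v$ is a full idempotent in the ring $I$.

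For $(ii) \Rightarrow (iii)$: assuming $T_E(v)$ has no cycles, it has no closed paths (every closed path contains a cycle, extracted by passing to a simple sub-closed-path). Lemma \ref{Colak} then forces $H$ to have no closed paths: any $w \in H$ lying on a closed path would have to lie in $T_E(v)$, and hereditariness would place the whole closed path inside $T_E(v)$, a contradiction. I now verify that $\bar E(H,S)$ is acyclic by a vertex-class analysis: the ``path'' vertices in $F_1 \cup F_2$ receive no edges at all, while the vertices in $S$ receive edges only from $F_2$-vertices; consequently every cycle of $\bar E(H,S)$ must lie entirely within $H$. Since $H$ is acyclic, so is $\bar E(H,S)$, and \cite{AR} yields that $I \cong L_K(\bar E(H,S))$ is von Neumann regular. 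The reverse implication $(iii) \Rightarrow (ii)$ runs the same argument backwards: $I$ VNR forces $\bar E(H,S)$ acyclic by \cite{AR}, whence $H$ (whose internal edges are preserved in $\bar E(H,S)$) is acyclic, and so is the subgraph $T_E(v) \subseteq H$.

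For $(iii) \Rightarrow (i)$: since $v \in I$ we have $vLv = vIv$, a corner of the VNR ring $I$, and corners of VNR rings are VNR. For $(i) \Rightarrow (iii)$: every generator $\ell v \ell'$ of $I = LvL$ factors as $(\ell v)\cdot v\cdot(v\ell')$ with both $\ell v$ and $v\ell'$ already in $I$, so $IvI = I$ and $v$ is a full idempotent in $I$. Since $I$ has local units (being a Leavitt path algebra) and von Neumann regularity is Morita invariant for rings with local units, $vLv = vIv$ being VNR transfers to $I$ VNR. Combining the two equivalences yields $(i) \Leftrightarrow (ii) \Leftrightarrow (iii)$.

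The main obstacle I expect is the vertex-class bookkeeping needed to see that $\bar E(H,S)$ inherits acyclicity from $H$: one must check that cycles cannot route through the adjoined hedgehog vertices of $F_1 \cup F_2$ (which are pure sources) or through $S$-vertices (which can be reached only from $F_2$-vertices, themselves pure sources). A secondary concern is the appeal to Morita invariance of von Neumann regularity in the non-unital setting; if one prefers to avoid it, one can instead prove the contrapositive of $(i) \Rightarrow (ii)$ directly by choosing a cycle $c \in T_E(v)$ based at $w$, a path $\alpha: v \to w$, and observing that $a \mapsto \alpha a \alpha^*$ identifies $wLw$ with the corner $(\alpha\alpha^*)(vLv)(\alpha\alpha^*)$ of $vLv$, reducing the question to the assertion that $wLw$ is never VNR when $w$ lies on a cycle.
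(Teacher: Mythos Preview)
Your proof is correct, but your route through $(i)\Rightarrow(iii)$ differs from the paper's. The paper establishes $(i)\Rightarrow(ii)$ directly by an element-level argument: assuming a cycle $c$ in $T_E(v)$ with a path $p$ from $v$ to $c$, it considers $\gamma=v+pcp^{\ast}\in vLv$ and shows by comparing graded degrees that no $a\in vLv$ can satisfy $\gamma a\gamma=\gamma$, contradicting regularity. You instead invoke Morita invariance of von Neumann regularity for rings with local units, using that $v$ is a full idempotent in $I$ (since $IvI=LvL=I$). Both arguments are valid; yours is more conceptual and avoids any computation, while the paper's is self-contained and does not require the Morita machinery in the non-unital setting. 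Your implications $(ii)\Rightarrow(iii)$ and $(iii)\Rightarrow(i)$ coincide with the paper's, and your direct proof of $(iii)\Rightarrow(ii)$ (running the hedgehog argument backwards via the converse of \cite{AR}) is a pleasant shortcut that the paper does not take. Two minor remarks: in your setup $S=\emptyset$ rather than merely ``suitable'', since $I$ is generated by vertices; and the alternative you sketch at the end---reducing to ``$wLw$ is never VNR when $w$ lies on a cycle''---still requires an argument of exactly the type the paper supplies for $\gamma$, so it is not really a shortcut over the paper's approach.
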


\begin{proof}
(i) =%
%TCIMACRO{\TEXTsymbol{>} }%
%BeginExpansion
$>$
%EndExpansion
(ii). Assume $vLv$ is von Neumann regular. Suppose, by way of contradiction,
$T_{E}(v)$ contains a cycle $c$. Let $p$ be a path from $v$ to a vertex $w$ on
$c$. Let $\gamma$ denote $v+pcp^{\ast}\in vLv$. We wish to show that there is
no $a=vav\in vLv$ such that $\gamma a\gamma=\gamma$. Suppose, by way of
contradiction, such an $a$ exists. Write $a=%
%TCIMACRO{\tsum \limits_{i=r}^{s}}%
%BeginExpansion
{\textstyle\sum\limits_{i=r}^{s}}
%EndExpansion
a_{i}$ as a graded sum of homogeneous elements where $r,s\in%
%TCIMACRO{\U{2124} }%
%BeginExpansion
\mathbb{Z}
%EndExpansion
$ with $r\leq s$. Substituting for $\gamma$ and $a$ in the equation $\gamma
a\gamma=\gamma$, we obtain the equation
\[
(v+pcp^{\ast})%
%TCIMACRO{\tsum \limits_{i=r}^{s}}%
%BeginExpansion
{\textstyle\sum\limits_{i=r}^{s}}
%EndExpansion
a_{i}(v+pcp^{\ast})=(v+pcp^{\ast}).
\]
To reach a contradiction, we essentially follow the ideas in the proof of
$(1)\Rightarrow(2)$ in Theorem 1 of \cite{AR} by expanding the above equation,
considering it as a graded equation and comparing the degree of the components
on both sides. Existence of terms of higher degrees on the left hand side with
no terms of equal degree on the right hand side leads to a contradiction.
Hence $T_{E}(v)$ contains no cycles, thus proving (ii).

(ii) =%
%TCIMACRO{\TEXTsymbol{>} }%
%BeginExpansion
$>$
%EndExpansion
(iii). If $T_{E}(v)$ contains no cycles, then by Lemma \ref{Cycles} $H=I\cap
E^{0}$ contains no cycles. Now, by Result (a), the graded ideal $I\cong
L_{K}(\bar{E}(H,\emptyset))$. From its definition it is clear that the graph
$\bar{E}(H,\emptyset)$ also contains no cycles. Then we appeal to Theorem 1 of
\cite{AR} to conclude that $I\cong L_{K}(\bar{E}(H,\emptyset))$ is von Neumann
regular, thus proving (iii).

(ii) =%
%TCIMACRO{\TEXTsymbol{>} }%
%BeginExpansion
$>$
%EndExpansion
(iii). If $I$ is von Neumann regular, then so is the corner $vLv=vIv$.
\end{proof}

For the convenience of later use, we introduce the following definition.

\begin{definition}
A vertex $v$ in a graph $E$ is called an \textbf{acyclic vertex} if it
satisfies the equivalent conditions of Proposition \ref{acyclic vertex}.
\end{definition}

As an immediate consequence of Proposition \ref{acyclic vertex}, we get the
following result.

\begin{corollary}
\label{acyclic vertex implies von neumann regular}Let $E$ be an arbitrary
graph and let $A$ be the ideal generated by the set of all the acyclic
vertices in $E$. Then $A$ is von Neumann regular.
\end{corollary}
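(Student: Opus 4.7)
The plan is to mimic the argument used to prove (ii) $\Rightarrow$ (iii) of Proposition \ref{acyclic vertex}, but applied to the set $V$ of all acyclic vertices at once rather than to a single vertex $v$.

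First I would check that $V$ is hereditary: if $v$ is acyclic and $w \in T_E(v)$, then $T_E(w) \subseteq T_E(v)$ is still cycle-free, so $w \in V$. This matters because $A$ is then a graded ideal (being generated by vertices), and Result (a) lets us write $A \cong L_K(\bar{E}(H, S))$, where $H = A \cap E^0$ and $S = \{u \in B_H : u^H \in A\}$.

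Next, I would establish that $H$ contains no cycles. Since $V$ is hereditary and $A$ is the graded ideal generated by $V$, the set $H$ is the saturated closure of $V$. Now suppose some $w \in H$ lies on a closed path $c$ of $E$. Lemma \ref{Colak} forces $w \in V$. But a closed path through $w$ contains (by shortcutting at $w$) a cycle based at $w$, which sits inside $T_E(w)$; this contradicts $w$ being acyclic. Hence no vertex of $H$ lies on any closed path, and in particular $H$ contains no cycles.

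Finally, I would argue that $\bar{E}(H, S)$ is itself acyclic. Reading off the edges from Result (a), the only possible source-target transitions are $H \to H$ (since $H$ is hereditary), $S \to H$, $F_1 \to H$, and $F_2 \to S$. Hence any closed path in $\bar{E}(H, S)$ must lie wholly in $H$ once it enters $H$, and in fact must be entirely contained in $H$. But the full subgraph of $\bar{E}(H, S)$ on $H$ agrees with the corresponding subgraph of $E$, which has no cycles by the previous step. Thus $\bar{E}(H, S)$ is acyclic, and Theorem 1 of \cite{AR} concludes that $A \cong L_K(\bar{E}(H, S))$ is von Neumann regular.

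The main obstacle is the careful edge-by-edge analysis of $\bar{E}(H, S)$ needed to see that every closed path is confined to $H$; once that is in place, the acyclicity of $H$ is a clean consequence of Lemma \ref{Colak} together with the definition of an acyclic vertex.
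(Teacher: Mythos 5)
Your argument is correct and runs on exactly the machinery the paper intends: you globalize the (ii)~$\Rightarrow$~(iii) step of Proposition \ref{acyclic vertex} (hereditariness of the set of acyclic vertices, Lemma \ref{Colak} to keep cycles out of the saturated closure, acyclicity of the graph $\bar{E}(H,S)$ from Result (a), then Theorem 1 of \cite{AR}), which is what the paper's ``immediate consequence'' amounts to. The only cosmetic slip is the parenthetical claim that shortcutting yields a cycle \emph{based at} $w$ --- the extracted cycle need not pass through $w$, but it does lie in $T_{E}(w)$, which is all your contradiction requires.
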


REMARK: \textbf{As an aside}, we wish to point out that Proposition
\ref{acyclic vertex} appears to be one instance of the phenomenon that when a
graph property P of the graph $E$ is equivalent to a ring property Q of the
ring $L=L_{K}(E)$, then for any vertex $v\in E$, $T_{E}(v)$ has the property P
$\Longleftrightarrow$ $vLv$ has the property Q.

We digress a bit to point two other instances of such a phenomenaon. Two
special graph properties of $E$ which play an important role in the
investigation of $L_{K}(E)$ are Condition (L) and Condition (K). It was shown
in \cite{R1} that $E$ satisfies Condition (L) if and only if the Leavitt path
algebra $L=L_{K}(E)$ is a Zorn ring. Here, a ring $R$ is a \textit{Zorn ring}
if given any $a\in R$, there is a $b\in R$ such that $bab=b$ (For other
equivalent definitions, see \cite{R1}). Likewise, it was shown in \cite{ARS}
that the graph $E$ satisfies Condition (K) if and only if $L_{K}(E)$ is
right/left weakly regular. Recall that a ring $R$ is right \textit{weakly
regular} if to each $a\in R$ there is an $x\in RaR$ such that $a=ax$. (see
\cite{ARS} for other equivalent definitions and properties of left/right
weakly regular rings). In general, weak regularity and being a Zorn ring are
independent properties, neither implying the other (see \cite{R1} for examples).

\begin{proposition}
\label{Zorn rings}Let $E$ be an arbitrary graph and $L=L_{K}(E)$. Then, for a
vertex $v\in E$, the following are equivalent:

(i) \ The corner $vLv$ is a Zorn ring;

(ii) $T_{E}(v)$ satisfies Condition (L);

(iii) The ideal $I$ generated by $v$ is a Zorn ring.
\end{proposition}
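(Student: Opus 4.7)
The plan is to prove the three implications cyclically, $(i)\Rightarrow(ii)\Rightarrow(iii)\Rightarrow(i)$, following the template of Proposition~\ref{acyclic vertex}. Two elementary tools will carry most of the weight. First, if $T_E(w)$ is a cycle without exits then $wLw \cong K[x,x^{-1}]$, as noted in the remark preceding Proposition~\ref{acyclic vertex}. Second, the corner $eRe$ of any Zorn ring $R$ at an idempotent $e$ is again Zorn: given a nonzero $a \in eRe$ and a nonzero $b \in R$ with $bab = b$, the element $b' := ebe$ satisfies $b'ab' = b'$ because $a = eae$; and $b'\neq 0$, since otherwise left- and right-multiplying $bab = b$ by $e$ forces $eb = be = 0$, whence $b = (be)a(eb) = 0$, contradicting $b \neq 0$.

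For $(i)\Rightarrow(ii)$, I argue by contrapositive. If $T_E(v)$ contains a closed path with no exit, then following the (forced) outgoing edges starting at its base produces a cycle $c$ without exit based at some $w \in T_E(v)$. Fix a path $p$ from $v$ to $w$ and set $e = pp^*$, an idempotent in $vLv$. Using $p^*p = w$, the map $\phi(\xi) = p\xi p^*$ is a ring isomorphism $wLw \to eLe$, so $eLe \cong wLw \cong K[x,x^{-1}]$. If $vLv$ were Zorn, then by the corner fact above $K[x,x^{-1}]$ would also be Zorn; but in this integral domain no nonzero $\beta$ satisfies $\beta(x-1)\beta = \beta$, since that equation forces $\beta(x-1) = 1$ and $x-1$ is not a unit.

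For $(ii)\Rightarrow(iii)$, let $H = I \cap E^0$ and $S = \{u \in B_H : u^H \in I\}$, so by Result~(a) we have $I \cong L_K(\bar{E}(H,S))$. By the theorem of \cite{R1} cited in the remark before this proposition, it suffices to check that $\bar{E}(H,S)$ satisfies Condition~(L). The added vertices of $F_1 \cup F_2$ have no incoming edges in $\bar{E}(H,S)$ and so never lie on a closed path, and $\bar{E}(H,S)$ contains no edges running from $H$ into $S$; hence every closed path in $\bar{E}(H,S)$ has vertex set contained in $H$ and uses only edges inherited from $E$. By Lemma~\ref{Cycles}, the vertex set of such a closed path lies in $T_E(v)$, and Condition~(L) on $T_E(v)$ supplies an exit $f \in E^1$; since $s(f) \in T_E(v) \subseteq H$, the edge $f$ is already an edge of $\bar{E}(H,S)$.

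For $(iii)\Rightarrow(i)$, $v \in I$ gives $vLv = vIv$, a corner of $I$, so the corner fact from the first paragraph applies. The main obstacle is the step $(i)\Rightarrow(ii)$: the natural test element $pcp^* - pp^*$ for destroying the Zorn property lives in the sub-corner $eLe$ rather than in all of $vLv$, and one must ensure that a Zorn witness $b \in vLv$ can be cut down to a nonzero witness $b' \in eLe$. This is precisely what the reduction $b \mapsto ebe$ accomplishes, bringing the question back to the purely algebraic fact that $K[x,x^{-1}]$ is not a Zorn ring.
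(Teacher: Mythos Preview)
Your proof is correct and follows essentially the same route as the paper's: the same contrapositive via the corner $pp^{\ast}\,vLv\,pp^{\ast}\cong wLw\cong K[x,x^{-1}]$ for $(i)\Rightarrow(ii)$, the same passage to the hedgehog graph $\bar{E}(H,S)$ and appeal to \cite{R1} for $(ii)\Rightarrow(iii)$, and the same corner observation $vLv=vIv$ for $(iii)\Rightarrow(i)$. The only differences are cosmetic --- you supply a self-contained proof that corners of Zorn rings are Zorn and an explicit witness that $K[x,x^{-1}]$ is not Zorn, whereas the paper simply cites \cite{R1}; and you carry a general $S$ through $(ii)\Rightarrow(iii)$, though in fact the ideal generated by a vertex has $S=\emptyset$, which is what the paper uses.
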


\begin{proof}
(i) =%
%TCIMACRO{\TEXTsymbol{>} }%
%BeginExpansion
$>$
%EndExpansion
(ii). Assume $vLv$ is a Zorn ring. Suppose, on the contrary, $T_{E}(v)$
contains a cycle $c$ without exits in $T_{E}(v)$ and hence in $E$. Let $p$ be
a path from $v$ to a vertex $w$ on $c$. Then the map $\theta
:wLw\longrightarrow vLv$ given by $\theta(x)=pxp^{\ast}$ is a monomorphism.
Now $wLw\cong K[x,x^{-1}]$ as $c$ has no exits. So, for $\theta(w)=\epsilon$
we have $\epsilon L\epsilon\cong wLw\cong K[x,x^{-1}]$ and this is a
contradiction since, being a corner of the Zorn ring $vLv$, $\epsilon
L\epsilon=\epsilon vLv\epsilon$ must be a Zorn ring ( see \cite{R1}), but the
integral domain $K[x,x^{-1}]$ is obviously not a Zorn ring. Hence every cycle
in $T_{E}(v)$ must have an exit, thus proving (ii).

(ii) =%
%TCIMACRO{\TEXTsymbol{>} }%
%BeginExpansion
$>$
%EndExpansion
(iii). The proof is similar to that of (i) =%
%TCIMACRO{\TEXTsymbol{>} }%
%BeginExpansion
$>$
%EndExpansion
(ii) \ of Proposition \ref{acyclic vertex}. By Lemma \ref{Cycles}, $T_{E}(v)$
satisfies Condition (L) exactly when Condition (L) holds in $H=I\cap E^{0}$.
Then the graph $\bar{E}(H,\emptyset)$ also satisfies Condition (L).
Consequently, by Theorem 2.1 of \cite{R1}, $I\cong L_{K}(\bar{E}%
(H,\emptyset))$ is a Zorn ring.

(ii) =%
%TCIMACRO{\TEXTsymbol{>} }%
%BeginExpansion
$>$
%EndExpansion
(iii). If $I$ is a Zorn ring, then so is the corner $vLv=vIv$ (see \cite{R1}).
\end{proof}

\begin{proposition}
\label{weakly regular}Let $E$ be an arbitrary graph. Then the following are
equivalent for any vertex $v$ in $E$:

(i) $\ \ $The corner vLv is left/right weakly regular and a Zorn ring;

(ii) \ $T_{E}(v)$ satisfies Condition (K);

(iii) The ideal $I$ generated by $v$ is both left/right weakly regular and a
Zorn ring.
\end{proposition}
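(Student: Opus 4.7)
The plan is to prove the cyclic chain (i) $\Rightarrow$ (ii) $\Rightarrow$ (iii) $\Rightarrow$ (i), mirroring the structure of Propositions \ref{acyclic vertex} and \ref{Zorn rings}.

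The two routine directions go as follows. For (iii) $\Rightarrow$ (i), the corner $vLv=vIv$ inherits weak regularity and the Zorn property from $I$ (see \cite{ARS} and \cite{R1}, where these properties are shown to descend to corners). For (ii) $\Rightarrow$ (iii), I first note via Lemma \ref{Cycles} and Lemma \ref{Colak} that every closed path meeting $H=I\cap E^{0}$ already lies inside $T_{E}(v)$; hence Condition (K) on $T_{E}(v)$ transfers to $H$. The extra ``bar-vertices'' introduced in the hedgehog graph $\bar{E}(H,\emptyset)$ support no closed paths, so $\bar{E}(H,\emptyset)$ also satisfies Condition (K). Then Result (a), combined with the characterization from \cite{ARS} that $L_K(F)$ is left/right weakly regular if and only if $F$ satisfies Condition (K), gives that $I\cong L_K(\bar{E}(H,\emptyset))$ is weakly regular; since Condition (K) implies Condition (L), Proposition \ref{Zorn rings} supplies the Zorn property.

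The crux is (i) $\Rightarrow$ (ii). Assume $vLv$ is weakly regular and a Zorn ring. Proposition \ref{Zorn rings} already guarantees that $T_{E}(v)$ satisfies Condition (L). Suppose, toward a contradiction, that Condition (K) fails in $T_{E}(v)$. Invoking the standard equivalence (\cite{AAS}) that a graph satisfies Condition (K) exactly when every quotient by a hereditary saturated subset satisfies Condition (L), there is a proper hereditary saturated subset $H\subsetneq T_{E}(v)^{0}$ such that the quotient graph $T_{E}(v)\backslash H$ contains a cycle $c$ without exits. Pick a vertex $w$ on $c$ and a path $p$ from $v$ to $w$. The map $\theta(x)=pxp^{\ast}$ is a ring isomorphism from $wLw$ onto the corner $pp^{\ast}(vLv)pp^{\ast}$ of $vLv$, so $wLw$ inherits weak regularity. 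Writing $L'=L_{K}(T_{E}(v))$, one has $wLw=wL'w$, and letting $\mathcal{J}$ denote the ideal of $L'$ generated by $H$, one checks that $wL'w\cap\mathcal{J}=w\mathcal{J}w$, which yields the chain of ring isomorphisms
\[
wLw\,/\,w\mathcal{J}w \;\cong\; w(L'/\mathcal{J})w \;\cong\; wL_{K}(T_{E}(v)\backslash H)w \;\cong\; K[x,x^{-1}],
\]
the last because $c$ is a cycle without exits through $w$ in $T_{E}(v)\backslash H$. Since quotients of weakly regular rings are weakly regular, $K[x,x^{-1}]$ would have to be weakly regular; but as a commutative integral domain that is not a field, it is not. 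This contradiction forces Condition (K) on $T_{E}(v)$.

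The main obstacle is that failure of Condition (K) does not directly produce a cycle without exits inside $T_{E}(v)$ itself—Condition (L) persists under the Zorn hypothesis—so the ``bad'' cycle only materializes after passing to a quotient graph. The resolution is a two-step descent: first restrict to the corner $wLw$ at a vertex $w$ on the bad cycle, then divide out by $w\mathcal{J}w$ to reach $K[x,x^{-1}]$, whose failure of weak regularity supplies the contradiction. The auxiliary identification $wL'w/w\mathcal{J}w\cong w(L'/\mathcal{J})w$ is a straightforward consequence of $\mathcal{J}$ being two-sided, and is the one point in the argument requiring a direct verification.
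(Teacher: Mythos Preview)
Your proof is correct and follows essentially the same strategy as the paper: use the Zorn hypothesis to get Condition (L) on $T_E(v)$, then leverage the failure of Condition (K) to produce, after quotienting by a suitable hereditary saturated set, a cycle without exits whose corner is $K[x,x^{-1}]$, contradicting weak regularity.

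There are two minor differences in execution worth noting. First, where you invoke the abstract characterization ``$(K)$ holds iff every quotient satisfies $(L)$'' to obtain $H$, the paper constructs the set explicitly: it picks a vertex $w$ that is the base of exactly one closed path $c$, takes $Y$ to be the saturated closure of the hereditary closure of $\{r(e):e\text{ an exit for }c\}$, and uses Lemma~\ref{Colak} to check $Y\cap c^0=\emptyset$. Second, the order of operations is reversed: the paper quotients $L$ by the ideal $J=I(Y,\emptyset)$ first and then takes corners inside $\bar L=L/J$ (noting that $v\bar Lv$ is a homomorphic image of $vLv$), whereas you pass to the corner $wLw\cong pp^*(vLv)pp^*$ first and then quotient by $w\mathcal{J}w$. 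Both orders work since weak regularity descends to both corners and quotients. The paper's route avoids the auxiliary identification $wLw=wL'w$ by staying in $L$ throughout; your route avoids the explicit Colak argument by citing the standard equivalence. Neither buys anything substantial over the other.
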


\begin{proof}
(i) =%
%TCIMACRO{\TEXTsymbol{>} }%
%BeginExpansion
$>$
%EndExpansion
(ii). Suppose $vLv$ is both left/right weakly regular and a Zorn ring. We wish
to show $T_{E}(v)$ satisfies Condition (K). Already by Proposition
\ref{Zorn rings}, $T_{E}(v)$ satisfies Condition (L). The proof that Condition
(K) holds follows the ideas in some earlier papers (see, for e.g., Proposition
3.9 in \cite{ARS}). Suppose Condition (K) does not hold in $T_{E}(v)$. Then
there is a vertex $w\in T_{E}(v)$ which is the base of only one closed path
(cycle) $c$. Now $c$ has exits in $T_{E}(v)$ due to Condition (L). Let $X$
denote the hereditary closure of the set $\{r(e):e$ an exit for $c\}$ and $Y$
be the saturated closure of $X$. Since $T_{E}(v)$ does not satisfy Condition
(K), Lemma \ref{Colak} implies that $Y\cap c^{0}=\emptyset$. If
$J=I(Y,\emptyset)$ is the (graded) ideal generated by $Y$, then $L/J\cong
L_{K}(E\backslash(Y,\emptyset))$ and in $E\backslash(Y,\emptyset)$, $c$ is a
cycle without exits based at $w$. For convenience, denote $L_{K}%
(E\backslash(Y,\emptyset))$ by $\bar{L}$. Let $p$ be a path connecting $v$ to
$w$ in $E\backslash(Y,\emptyset)$. Then, as was done in the proof of
Proposition \ref{Zorn rings}, the map $\theta:w\bar{L}w\longrightarrow
v\bar{L}v$ given by $\theta(x)=pxp^{\ast}$ is a monomorphism. Now $w\bar
{L}w\cong K[x,x^{-1}]$ as $c$ has no exits. If $\epsilon=\theta(w)$, then
$K[x,x^{-1}]\cong\epsilon\bar{L}\epsilon=\epsilon v\bar{L}v\epsilon$, a corner
of $v\bar{L}v$. This is a contradiction, since $v\bar{L}v$, being a
homomorphic image of $vLv$, is left/right weakly regular while the commutative
integral domain $K[x,x^{-1}]$ is not weakly regular. Hence $T_{E}(v)$
satisfies Condition (K), thus proving (ii).

(ii) =%
%TCIMACRO{\TEXTsymbol{>} }%
%BeginExpansion
$>$
%EndExpansion
(iii) Again the proof is similar to the proof of (i)=%
%TCIMACRO{\TEXTsymbol{>}}%
%BeginExpansion
$>$%
%EndExpansion
(ii) in Proposition \ref{acyclic vertex}. If $T_{E}(v)$ satisfies Condition
(K), then by Lemma \ref{Cycles}, Condition (K) holds in $H=I\cap E^{0}$ and
also in the graph $\bar{E}(H,\emptyset)$. Then, by \cite{ARS}, $I\cong
L_{K}(\bar{E}(H,\emptyset))$ is left/right weakly regular. Since Condition (K)
implies Condition (L), we appeal to Theorem 2.1 of \cite{R1} to conclude that
$L_{K}(\bar{E}(H,\emptyset))$ is also a Zorn ring. This proves (ii).

(iii) =%
%TCIMACRO{\TEXTsymbol{>} }%
%BeginExpansion
$>$
%EndExpansion
(i). This immediate from the fact that a corner of a left/right weakly regular
Zorn ring is again left/right weakly regular Zorn ring.
\end{proof}

\section{Leavitt path algebras with finite Gelfand-Kirillov dimension}

As we noted Corollary \ref{Finite case}, when $E$ is a finite graph all the
irreducible representations of $L_{K}(E)$ are finitely presented exactly when
distinct cycles in $E$ are disjoint. Interestingly, it was shown in
\cite{AAJZ1} that this same condition for a finite graph $E$ is equivalent to
the Leavitt path algebra $L_{K}(E)$ having finite Gelfand-Kirillov dimension.
Examples show that this equivalence no longer holds if $E$ is an infinite
graph. In this section, we extend the results of \cite{AAJZ1}, \cite{AAJZ2} to
obtain a complete characterization of and a structure theorem for Leavitt path
algebras over an arbitrary graph having finite Gelfand-Kirillov dimension. It
turns out that the "building blocks" for these algebras $L$ are von Neumann
regular rings and matrix rings over the Laurent polynomial ring $K[x,x^{-1}]$.
The acyclic vertices introduced in the previous section play a useful role.

We shall first recall the definition of the Gelfand-Kirillov dimension of
associative algebras over a field.

Let $A$ be a finitely generated algebra over a field $K$, generated by a
finite dimensional subspace $V=Ka_{1}\oplus\cdot\cdot\cdot\oplus Ka_{m}$. Let
$V^{0}=K$ and, for each $n\geq1$, let $V^{n}$ denote the $K$-subspace of $A$
spanned by all the mononomials of length $n$ in $a_{1},\cdot\cdot\cdot,a_{m}$.
Set $V_{n}=%
%TCIMACRO{\tsum \limits_{i=0}^{n}}%
%BeginExpansion
{\textstyle\sum\limits_{i=0}^{n}}
%EndExpansion
V^{i}$. Then the \textbf{Gelfand-Kirillov dimension} of $A$ ( for short, the
\textbf{GK-dimension} of $A$) is defined by%
\[
\text{GK-dim}(A):=\underset{n\rightarrow\infty}{\lim\sup}\log_{n}(\dim
V_{n}).
\]
It is known that the GK-dim($A$) is independent of the choice of the
generating subspace $V$.

If $A$ is an infinitely generated $K$-algebra, then the GK-dimension of $A$ is
defined as
\[
\text{GK-dim}(A):=\underset{B}{Sup\text{ }}\text{GK-dim}(B)
\]
where $B$ runs over all the finitely generated $K$-subalgebras of $A$.

Some useful examples the GK-dimension of algebras (see \cite{KL}) are: The
GK-dimension the matrix ring $M_{\Lambda}(K)$ is $0$ and the GK-dimension of
the matrix ring $M_{\Lambda^{\prime}}(K[x,x^{-1}])$ is $1$, where
$\Lambda,\Lambda^{\prime}$ are arbitrary index sets.

We also note that if an algebra $A$ has finite GK-dimension, then every
subalgebra of $A$ and every homomorphic image of $A$ also has finite
GK-dimension. But this does not hold for extensions: If $I$ is an ideal of an
algebra $A$, it may happen that both $I$ and $A/I$ have finite GK-dimension,
but $A$ has infinite GK-dimension. We refer to \cite{KL} for these and for
other properties and results on\ the GK-dimension of algebras.

As mentioned in the first paragraph of this section, we shall now give an
example of infinite graph $E$ in which distinct cycles have no common vertex,
but neither all the simple $L_{K}(E)$-modules are finitely presented nor the
GK-dimension of $L_{K}(E)$ is finite.

Example: Let $E=F\cup G$ be the union of two graphs $F,G$ together with a
connecting edge $g$. Specifically, $F$ is the graph obtained by removing the
vertex $w$ and edge $e$ from the graph $E^{\prime}$ mentioned at the end of
Section 3. Thus $F$ consists of infinitely many loops $c_{i}$ based at
vertices $v_{i}$ for $i=1,2,\cdot\cdot\cdot$ and, for each $i$, there is an
edge $e_{i}$ with $s(e_{i})=v_{i+1}$ and $r(e_{i})=v_{i}$. The graph $G$ is
the countable infinite clock, namely, $G^{0}=\{u\}\cup\{w_{i}:i=1,2,\cdot
\cdot\cdot\}$ and $G^{1}=\{f_{i}:i=1,2,\cdot\cdot\cdot\}$ such that, for all
$i$, $s(f_{i})=u$ and $r(f_{i})=w_{i}$. Finally, there is a connecting edge
$g$ with $s(g)=v_{1}$ and $r(g)=u$. Now clearly distinct cycles/loops in $E$
have no common vertex. Since $E$ is not row-finite, Corollay
\ref{row-finiteness} implies that not all simple modules over $L_{K}(E)$ are
finitely presented. Also $L_{K}(E)$ does not have finite GK-dimension. To see
this, let, for each $n>1$, $F_{n}$ denote the subgraph where $(F_{n}%
)^{0}=\{v_{1},\cdot\cdot\cdot,v_{n}\}$ and $(F_{n})^{1}=\{e_{1},\cdot
\cdot\cdot,e_{n-1}\}\cup(c_{1},\cdot\cdot\cdot,c_{n}\}$. Then $F_{n}$ is a
complete subgraph. If $B_{n}$ is the subalgebra generated by $F_{n}$, then by
Theorem 5 of \cite{AAJZ1}, $B_{n}\cong L_{K}(F_{n})$ has GK-dimension $2n-1$.
Consequently, GK-dim($L_{K}(E))\geq\sup\{GK-\dim(B_{n})\}$ is infinite.

If we consider just the graph $F$, then by Theorem \ref{Non Acyclic case}
every simple left/right module over $L_{K}(F)$ is finitely presented (and
distinct cycles in $E$ are disjoint), but by the preceding arguments
$L_{K}(F)$ does not have finite GK-dimension. On the other hand, since $G$ is
acyclic, $L_{K}(G)$ is a directed union of direct sums of matrix rings over
$K$ (see Theorem 1, \cite{AR}) and so has GK-dimension $0$, but not every
simple left/right module over $L_{K}(G)$ is finitely presented , by Theorem
\ref{No cycles}.

Our goal is to give complete description of the Leavitt path algebras over
arbitrary graphs having finite GK-dimension. To accomplish that, we shall be
using Result (a) from the previous section together with following Result (b).

\textbf{Result (b)}. The subalgebra construction using a\ finite set of edges
in a graph (\cite{AR}): Let $E$ be an arbitrary graph and let $F$ be a finite
set of edges in $E$. Then the graph $E_{F}$ is defined by setting

$(E_{F})^{0}=F\cup\lbrack(r(F)\cap s(F)\cap s(E^{1}\backslash F)]\cup
(r(F)\backslash s(F))$;

$(E_{F})^{1}=\{(e,f)\in F\times(E_{F})^{0}:r(e)=s(f)\}\cup$

$\{(e,r(e)):e\in F$ with $r(e)\in(r(F)\backslash s(F))\}$;

Here $r,s$ are defined by $s(x,y)=x$ and $r(x,y)=y$ for any $(x,y)\in
(E_{F})^{1}$.

A graded monomorphism $\theta:L_{K}(E_{F})\rightarrow L_{K}(E)$ was defined in
\cite{AR} and in Proposition 1 of that paper it was shown that $im(\theta)$
contains $F\cup F^{\ast}$ and $\{r(e):e\in F\}$.

The following observations will be useful in our proof: If $F$ is a finite set
of edges in $E$ then a path $p=e_{1}e_{2}\cdot\cdot\cdot e_{n}$ with $e_{i}\in
F$ is a cycle in $E$ if and only if $\bar{p}=(e_{1},e_{2})\cdot\cdot
\cdot(e_{n},e_{1})$ is a cycle in $E_{F}$. Moreover, if $C_{1}\geq\cdot
\cdot\cdot\geq C_{k}$ is a chain of cycles in $E$ where the edges in all of
the $C_{i}$ belong to $F$ then $\bar{C}_{i}\geq\cdot\cdot\cdot\geq\bar{C}_{k}$
is a chain of cycles in $E_{F}$. Also suppose $C_{1}=e_{1}e_{2}\cdot\cdot\cdot
e_{n}$ and $C_{2}=f_{1}f_{2}\cdot\cdot\cdot f_{m}$ are two cycles in $E$ with
$F^{\prime}=\{e_{1},\cdot\cdot\cdot,e_{n},f_{1},\cdot\cdot\cdot,f_{m}\}$ then
$C_{1},C_{2}$ will have a common vertex $s(e_{1})=s(f_{1})$ in $E$ if and only
if, in the graph $E_{F^{\prime}}$, $(e_{1},e_{2})\cdot\cdot\cdot(e_{n-1}%
,e_{n})(e_{n},f_{1})(f_{1},f_{2})\cdot\cdot\cdot(f_{m-1},f_{m})(f_{m},e_{1})$
is a cycle sharing common vertices with the cycle $(e_{1},e_{2})\cdot
\cdot\cdot(e_{n},e_{1})$ and with the cycle $(f_{1},f_{2})\cdot\cdot
\cdot(f_{m},f_{1})$.

We begin with the following easy Lemma.

\begin{lemma}
\label{Intersection Zero}Let $E$ be an arbitrary graph. If $A$ is the (graded)
ideal generated by the set $X$ of all the acyclic vertices in $E$ and $N$ is
the (graded) ideal generated by the set $Y$ of vertices in all the cycles with
no exits in $E$, then $A\cap N=0$.
\end{lemma}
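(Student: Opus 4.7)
Since $A$ and $N$ are each generated by a set of vertices (homogeneous of degree $0$), both are graded ideals of $L=L_K(E)$, and hence so is $A\cap N$. The plan is to reduce the claim to showing $(A\cap N)\cap E^0=\emptyset$. Any graded ideal $I$ equals $I(H,S)$ with $H=I\cap E^0$ and $S\subseteq B_H$; if $H=\emptyset$, the defining condition $0<|s^{-1}(v)\cap r^{-1}(E^0\setminus H)|<\infty$ for $B_H$ becomes $0<|s^{-1}(v)|<\infty$, which no infinite emitter satisfies. Thus $B_\emptyset=\emptyset$, forcing $I=I(\emptyset,\emptyset)=0$, and it suffices to prove $(A\cap E^0)\cap(N\cap E^0)=\emptyset$.

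Put $H_A=A\cap E^0$ and $H_N=N\cap E^0$; these are the saturated (hereditary) closures of $X$ and $Y$, respectively. I would first show that every vertex $v\in H_A$ is acyclic. Since $H_A$ is hereditary, $T_E(v)\subseteq H_A$; if $T_E(v)$ contained a cycle $c$, then each vertex of $c$ would lie in $H_A$ and on a closed path, so by Lemma \ref{Colak} would lie in $X$ itself, contradicting the fact that an acyclic vertex cannot lie on a cycle (it would violate Proposition \ref{acyclic vertex}(ii) applied to that vertex). Hence every vertex of $H_A$ satisfies the acyclic-vertex condition.

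Dually, I would show that no vertex of $H_N$ is acyclic. For this I use transfinite induction along the saturation process building $H_N$ from $Y$: the claim is that every vertex $v\in H_N$ admits a path in $E$ ending at some vertex of $Y$. The base case $v\in Y$ is trivial; for the inductive step, a newly adjoined regular vertex $v$ satisfies $r(s^{-1}(v))\subseteq H_N^{(\beta)}$ for some earlier stage $\beta$, so every edge out of $v$ leads to a vertex that, by induction, reaches $Y$. Since the vertices of $Y$ lie on cycles, it follows that $T_E(v)$ contains a cycle, and Proposition \ref{acyclic vertex} then shows $v$ is not acyclic. Combined with the previous paragraph this yields $H_A\cap H_N=\emptyset$, completing the proof.

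The main obstacle is the saturation step for $H_N$: the hereditary direction is immediate, but one must verify via transfinite induction that saturation never introduces an acyclic vertex, i.e.\ that every vertex produced by saturating $Y$ still has a genuine path in $E$ to some vertex of $Y$. Everything else—the gradedness reduction, the breaking-vertex calculation for $H=\emptyset$, and the use of Lemma \ref{Colak} for $H_A$—is essentially bookkeeping.
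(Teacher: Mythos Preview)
Your argument is correct, but it proceeds along a genuinely different route from the paper's. The paper exploits that both $A$ and $N$, being graded ideals, have local units, whence $A\cap N=AN$; it then shows $AN=0$ by a direct monomial computation: writing $a=\sum k_i\alpha_i\beta_i^{\ast}\in A$ with $r(\beta_i)\in X$ and $b=\sum l_j\gamma_j\delta_j^{\ast}\in N$ with $r(\gamma_j)\in Y$, any nonzero term in $ab$ would force $\beta_i^{\ast}\gamma_j\neq 0$, hence either $\beta_i=\gamma_jp$ (producing an exit from a no-exit cycle) or $\gamma_j=\beta_iq$ (producing a path from an acyclic vertex into a cycle), both impossible.

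Your approach instead invokes the $(H,S)$ classification of graded ideals to reduce the entire question to the vertex level, and then handles $H_A$ via Lemma~\ref{Colak} and $H_N$ via a transfinite induction along the saturation of $Y$. This is more structural and perhaps more conceptual, but it leans on heavier machinery (Tomforde's description of graded ideals and the breaking-vertex formalism), whereas the paper's proof is a short self-contained calculation using only the standard monomial form of elements in an ideal generated by a hereditary set. Both arguments ultimately hinge on the same graph-theoretic incompatibility---that no vertex can simultaneously reach a cycle and have an acyclic tree---but the paper reaches it in one line of algebra rather than through the lattice of graded ideals.
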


\begin{proof}
Now both the ideals $A$ and $N$ possess local units and so $A\cap N=AN$. So it
is enough to show that $AN=0$. Suppose $a=%
%TCIMACRO{\tsum }%
%BeginExpansion
{\textstyle\sum}
%EndExpansion
k\alpha_{i}\beta_{i}^{\ast}\in A$ so that $r(\alpha_{i})=r(\beta_{i})\in X$
and let $b=%
%TCIMACRO{\tsum }%
%BeginExpansion
{\textstyle\sum}
%EndExpansion
l_{j}\gamma_{j}\delta_{j}^{\ast}\in N$ so that $r(\gamma_{j})=r(\delta_{j})\in
Y$. If $ab\neq0$, then for some $i,j$, $\alpha_{i}\beta_{i}^{\ast}\gamma
_{j}\delta_{j}^{\ast}\neq0$ which implies $\beta_{i}^{\ast}\gamma_{j}\neq0$.
This means that either $\beta_{i}=\gamma_{j}p$ or $\gamma_{j}=\beta_{i}q$
where $p,q$ are some paths. This leads to a contradiction for the following
reasons. If $\beta_{i}=\gamma_{j}p$, then $p$ gives rise to an exit for the no
exit cycle containing $r(\gamma_{j})$, a contradiction. If $\gamma_{j}%
=\beta_{i}q$, then $q$ is a path from $r(\beta_{i})$ to the vertex
$r(\gamma_{j})$ which sits on a cycle, contradicting the fact that
$r(\beta_{i})$ is an acyclic vertex. Hence $A\cap N=AN=0$.
\end{proof}

We are now ready to prove the main theorem of this section.

\begin{theorem}
\label{Finite GK-dim}Let $E$ be an arbitrary graph, $K$ be any field and let
$L=L_{K}(E)$. Then the following conditions are equivalent:

(i) $\ \ L$ has finite GK-dimension $\leq m$, where $m$ is a nonnegative integer;

(ii) \ The relation $\geq$ defines a partial order in the set $P$ of all
\ cycles in $E$ and there is a non-negative integer $m$ such that every chain
in $P$ has length at most $m$;

(iii) $L$ is the union of a finite chain of graded ideals%
\[
0\leq I_{0}<I_{1}<\cdot\cdot\cdot<I_{m}=L
\]
where $m$ is a fixed non-negative integer, $I_{0}$ (may be zero) is von
Neumann regular and, for each $j$ \ with $0\leq j\leq m-1$, $I_{j+1}/I_{j}$ is
a direct sum of a von Neumann regular ring and/or direct sums of matrix rings
of the form $M_{\Lambda_{j}}(K[x,x^{-1}])$ where $\Lambda_{j}$ are arbitrary
index sets.
\end{theorem}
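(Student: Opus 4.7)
The plan is to prove the cyclic chain of implications (i)~$\Rightarrow$~(ii)~$\Rightarrow$~(iii)~$\Rightarrow$~(ii)~$\Rightarrow$~(i), reducing the dimension side to finite subgraphs via Result~(b) in order to invoke Theorem~5 of \cite{AAJZ1}, and building the ideal chain in~(iii) by a structural induction on the maximum cycle-chain length.

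For (i)~$\Rightarrow$~(ii): Since Result~(b) embeds $L_K(E_F)$ as a graded subalgebra of $L$ for every finite edge-set $F \subseteq E^1$, each such $L_K(E_F)$ inherits finite GK-dimension, and Theorem~5 of \cite{AAJZ1} forces distinct cycles of the finite graph $E_F$ to be disjoint and cycle-chains in $E_F$ to be uniformly bounded. The observations listed just before Lemma~\ref{Intersection Zero} then transfer these conclusions back to $E$: two cycles of $E$ sharing a vertex, with $F'$ their combined edge-set, would produce two cycles of $E_{F'}$ sharing a vertex; and a cycle-chain in $E$ of length exceeding the AAJZ1 bound coming from GK-dim$(L)$ would, upon enlarging $F$ to include the edges along the chain, lift to an equally long chain in $E_F$ and force too high a GK-dimension.

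For (ii)~$\Rightarrow$~(iii): I induct on the maximum chain length $m$. When $m = 0$ every vertex is acyclic, so Corollary~\ref{acyclic vertex implies von neumann regular} gives $L = I_0$ VNR. For the inductive step let $X$ be the set of acyclic vertices and set $I_0 = I(X, B_X)$. Since $X$ contains no cycles, Result~(a) realises $I_0$ as the Leavitt path algebra of the acyclic graph $\bar{E}(X, B_X)$, hence VNR by Theorem~1 of \cite{AR} (consistent with Corollary~\ref{acyclic vertex implies von neumann regular}). Including the breaking-vertex elements $v^X$ for all $v \in B_X$ guarantees that the Tomforde quotient graph $E \setminus (X, B_X)$ carries no new sinks and, with $X$ removed, has no acyclic vertices at all; hence every minimal cycle of $E$ descends to a no-exit cycle there. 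By Lemma~\ref{Intersection Zero} applied in this quotient and Proposition~3.7(iii) of \cite{AAPS}, let $I_1 \supseteq I_0$ be the graded ideal whose image in $L/I_0$ is the direct sum of matrix-ring ideals generated by these no-exit cycles; then $I_1/I_0$ has the required form. The further quotient $L/I_1$ corresponds to a Tomforde quotient whose cycle-chain length equals $m - 1$, so the induction hypothesis and the correspondence theorem for graded ideals complete the chain in $L$.

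For (iii)~$\Rightarrow$~(ii) and (ii)~$\Rightarrow$~(i): Given the ideal chain, each cycle $c$ of $E$ has a unique \emph{level} $n_c$ defined as the least index with $c^0 \subseteq I_{n_c} \cap E^0$; in the quotient representing $L/I_{n_c - 1}$, the matrix/VNR decomposition of $I_{n_c}/I_{n_c-1}$ forces $c$ to sit inside a matrix summand as a no-exit cycle. If $c > c'$, the absence of exits for $c$ at that level prevents $c'$ from living at level $n_c$, so $n_{c'} < n_c$ and any chain of cycles has length at most $m$. Finally, for (ii)~$\Rightarrow$~(i), any finitely generated subalgebra $B$ of $L$ sits inside some $L_K(E_F)$; the finite graph $E_F$ inherits from $E$ both disjointness of distinct cycles and the chain-length bound $m$ (using the lifting observations after Result~(b)), so Theorem~5 of \cite{AAJZ1} gives a bound on GK-dim$(L_K(E_F))$ depending only on $m$, yielding GK-dim$(L) < \infty$. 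The main obstacle is the inductive step of (ii)~$\Rightarrow$~(iii) in the non-row-finite setting: breaking vertices of $X$ must be packaged into $I_0$ by taking $S = B_X$, so that the quotient graph has neither new sinks nor surviving acyclic vertices; this is what lets the next layer be identified cleanly as a direct sum of matrix rings generated by no-exit cycles and keeps the induction strictly decreasing in $m$.
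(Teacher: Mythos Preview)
Your overall architecture matches the paper's: reduce the equivalence (i)$\Leftrightarrow$(ii) to the finite-graph case via Result~(b) and Theorem~5 of \cite{AAJZ1}, and build the chain in (iii) by induction on $m$. Your construction in (ii)$\Rightarrow$(iii) differs from the paper's --- you take $I_0 = I(X, B_X)$ with $X$ the set of \emph{all} acyclic vertices and $S = B_X$ the full breaking set, whereas the paper takes the smaller $I_0$ generated only by $\{r(e): e \text{ an exit of a minimal cycle}\}$ with $S = \emptyset$, and then folds the remaining acyclic vertices into $I_1/I_0$ alongside the no-exit cycles (using Lemma~\ref{Intersection Zero} to split that layer as VNR $\oplus$ matrix rings). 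Your choice is valid and has the pleasant consequence that $E\setminus(X,B_X)$ has no acyclic vertices at all, so your $I_1/I_0$ is a pure direct sum of matrix rings with no VNR summand; the paper's route keeps $S=\emptyset$ throughout and lets the breaking-vertex sinks $u'$ appear harmlessly in each successive quotient.

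There is, however, a real gap in your (iii)$\Rightarrow$(ii). You assert that the ring decomposition $I_{n_c}/I_{n_c-1} \cong R \oplus \bigoplus M_{\Lambda}(K[x,x^{-1}])$ ``forces $c$ to sit inside a matrix summand as a no-exit cycle,'' but this does not follow from an abstract ring isomorphism alone: nothing in that decomposition directly controls exits of $c$ in the underlying graph of $I_{n_c}/I_{n_c-1}$. The paper closes this gap by noting that such a decomposition gives $\mathrm{GKdim}(I_{n_c}/I_{n_c-1}) \leq 1$, identifying $I_{n_c}/I_{n_c-1}$ with a Leavitt path algebra via Result~(a), and then reusing Result~(b) together with Theorem~5 of \cite{AAJZ1} (exactly the already-established (i)$\Rightarrow$(ii) machinery) to conclude that two comparable cycles $c > c'$ cannot both occur at level $n_c$. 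You also skip the verification that $\geq$ is antisymmetric (distinct cycles in $E$ share no vertex), which is part of condition~(ii); the paper treats this first, again via the same GK-dimension bound on each layer. Finally, a minor imprecision in (ii)$\Rightarrow$(i): a finitely generated subalgebra of $L$ need not embed in some $L_K(E_F)$ but rather in some $B(S) \cong L_K(E_F) \oplus V$ with $V$ finite-dimensional, which suffices since $V$ contributes nothing to the GK-dimension.
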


\begin{proof}
Assume (i). Suppose, by way of contradiction, $E$ contains two distinct cycles
$C,C^{\prime}$ having a common vertex. Let $F$ be the set of all edges
belonging to $C$ and $C^{\prime}$. Then the (finite) graph $E_{F}$ will
contain two cycles with a common vertex and so, by Theorem 5 of \cite{AAJZ1},
$L_{K}(E_{F})$ has infinite GK-dimension. Then the subalgebra $im(\theta
)\cong$ $L_{K}(E_{F})$ (See Result (b)) and hence $L$ will have infinite
GK-dimension, a contradiction. So distinct cycles in $E$ have no common
vertex. \ This makes the relation $\geq$ antisymmetric and hence a partial
order in the set of all the cycles in $E$. If $m=GK-dim(L)=0$, then $E$ must
be acyclic. Because, if there is a cycle $c$ in $E$ based at a vertex $v$,
then consider $V:=Kv\oplus Kc$. Since $\dim(V^{n})\geq n$, this forces
$GK-\dim(L)\geq1$, a contradiction. Thus $E$, being acyclic, trivially
satisfies Condition (ii). So Assume $m\geq1$. Suppose there is a chain of
cycles $C_{1}\geq\cdot\cdot\cdot\geq C_{d}$ of length $d>m$ in $E$. For each
$i$, let $\gamma_{i}$ be a path connecting a fixed vertex on $C_{i}$ to a
fixed vertex on $C_{i+1}$. If $F$ is the set of all the edges in the cycles
$C_{1},\cdot\cdot\cdot,C_{d}$ and in the paths $\gamma_{1},\cdot\cdot
\cdot\cdot,\gamma_{d-1}$, then the subalgebra $im(\theta)\cong L_{K}(E_{F})$
will have, by Theorem 5 of \cite{AAJZ1}, GK-dimension $\geq2d-1$ which is
%TCIMACRO{\TEXTsymbol{>} }%
%BeginExpansion
$>$
%EndExpansion
$m$. Clearly then the GK-dimension of $L$ is$\ >m$, a \ contradiction. This
proves (ii).

Assume (ii). We prove (iii) by induction on $m$. Suppose $m=0$. This means
that the graph $E$ contains no cycles and consequently $L$ is von Neumann
regular, by Theorem 1 of \cite{AR}. So Condition (iii) holds with $L=I_{0}$.
Suppose $m\geq1$ and assume that we have shown that $L$ satisfies Condition
(iii) if the upper bound for the lengths of chains of cycles in $E$ is $m-1$.
Note that if a cycle $C$ is a minimal element in the (artinian) partially
ordered set $P$ of cycles under $\geq$ in $E$, then either $C$ has no exits or
for each exit $e$ for $C$, $r(e)$ is an acyclic vertex. Let $I_{0}$ be the
graded ideal generated by the set $\{r(e):e$ an exit for a minimal cycle in
$P\}$. Since each such $r(e)$ is an acyclic vertex, $I_{0}$ is von Neumann
regular, by Corollary \ref{acyclic vertex implies von neumann regular}. If
$H_{0}=I_{0}\cap E^{0}$, then $I_{0}=I(H_{0},\emptyset)$ where $\emptyset$ is
the empty set and in $E\backslash(H_{0},\emptyset)$ the minimal cycles in the
poset of cycles have no exit. Now $L/I_{0}\cong L_{K}(E\backslash
(H_{0},\emptyset)$. Identifying $L/I_{0}$ with $L_{K}(E\backslash
(H_{0},\emptyset)$, let $I_{1}/I_{0}$ denote the (graded) ideal generated by
all the acyclic vertices and the vertices in all the cycles without exits in
$E\backslash(H_{0},\emptyset)$. By Proposition 3.7 of \cite{AAPS} and further
Corollary \ref{acyclic vertex implies von neumann regular} and Lemma
\ref{Intersection Zero} above, $I_{1}/I_{0}$ is a direct sum of a von Neumann
regular ring and a direct sum of matrix rings of the form $M_{\Lambda^{(1)}%
}(K[x,x^{-1}])$ with $\Lambda^{(1)}$ arbitrary index sets. Let $H_{1}%
=I_{1}\cap E^{0}$. Then $L/I_{1}\cong L_{K}(E\backslash(H_{1},\emptyset))$.
Since $(E\backslash(H_{1},\emptyset))^{0}=E^{0}\backslash H_{1}\cup
\{u^{\prime}:u\in B_{H_{1}}\}$ and since the $u^{\prime}$ are all sinks in
$E\backslash(H_{1},\emptyset)$, the maximum length of chains in $E\backslash
(H_{1},\emptyset)$ is $m-1$. So by induction, $L/I_{1}$ is the union of a
chain of graded ideals which we conveniently write as
\[
0<I_{2}/I_{1}<\cdot\cdot\cdot<I_{m}/I_{1}=L/I_{1}%
\]
where, for $1\leq j\leq m-1$, $(I_{j+1}/I_{1})/(I_{j}/I_{1})$ is a direct sum
of a von Neumann regular ring and direct sums of matrix rings of the form
$M_{\Lambda^{(j)}}(K[x,x^{-1}])$ with $\Lambda^{(j)}$ arbitrary index sets.
From this we immediately obtain the needed chain for $L$ with the stated
properties of Condition (iii).

Assume (iii). We wish to prove (ii). For each $j$, let $H_{j}=I_{j}\cap E^{0}$
and so $E^{0}$ is the union of the chain of hereditary saturated sets
$\emptyset\subseteqq H_{0}\subset H_{1}\subset\cdot\cdot\cdot\subset
H_{m}=E^{0}$. Suppose, by way of contradiction, $E$ contains two cycles $g,h$
having a common vertex $v$. Let $j\geq0$ be the smallest integer such that
$v\notin H_{j}$. Then $v\in H_{j+1}$ and in that case $g^{0},h^{0}\subset
H_{j+1}\backslash H_{j}$. Now $I_{j+1}/I_{j}$ is a direct sum a von Neumann
regular ring and direct sums of matrix rings of the form $M_{\Lambda^{(j)}%
}(K[x,x^{-1}])$ with $\Lambda^{(j)}$ arbitrary index sets and so
$I_{j+1}/I_{j}$ has finite GK-dimension. Also identifying $L/I_{j}$ with
$L_{K}(E\backslash(H_{j},\emptyset))$, we get an isomorphism $I_{j+1}%
/I_{j}\cong L_{K}((\overline{E}(H_{j+1}\backslash H_{j},\emptyset))$ by
(Theorem 6.1, \cite{RT}). From the proof of (i) $\Longrightarrow$ (ii) it is
then clear that distinct cycles in $\overline{E}(H_{j+1}\backslash
H_{j},\emptyset)$ have no common vertex. But this contradicts the assumption
that the cycles $g,h$ have a common vertex $v$ in $\overline{E}(H_{j+1}%
\backslash H_{j},\emptyset)$. Hence no two distinct cycles in $E$ will have a
common vertex. Suppose there is a chain of cycles $C_{1}\geq C_{2}\geq
\cdot\cdot\cdot\geq C_{d}$ with $d>m$ and the cycle $C_{d}$ is based at a
vertex $u$. As before if $j\geq0$ is the smallest integer such that $u\notin
I_{j}$, then $u\in H_{j+1}\backslash H_{j}$ and so $(C_{d})^{0}\subset
H_{j+1}\backslash H_{j}$. If also $(C_{d-1})^{0}\subset H_{j+1}\backslash
H_{j}$, then the GK-dimension of $I_{j+1}/I_{j}$ will be $\geq2$. Because, if
$F$ is the finite set of edges on the cycles $C_{d},C_{d-1}$ and on a path
connecting these two cycles, then the subalgebra of $I_{j+1}/I_{j}$ isomorphic
to $L_{K}(_{F}E)$ will have GK-dimension $\geq2$, by Theorem 5 of
\cite{AAJZ1}, contradicting the fact that $I_{j+1}/I_{j}$ has GK-dimension
$\leq1$. So $(C_{d-1})^{0}\notin H_{j+1}$. Proceeding like this we reach a
conclusion that $(C_{d-(m-j)})^{0}\notin H_{j+(m-j)}=H_{m}=E^{0}$, a
contradiction. Hence every chain of cycles \ in $E$ must have length \ at most
$m$ and this proves (ii).

Assume (ii). It was shown in (Proposition 2, \cite{AR})
$L=\underset{\longrightarrow}{Lim}B(S)$ where $S$ varies over all the finite
subsets of $L$ and $B(S)$ is a subalgebra generated by $S$. Moreover, each
$B(S)$ is isomorphic to $L_{K}(E_{F})\oplus V$ where $F$ is a finite set of
edges on the paths that show up in the representation of the elements of $S$
as $K$-linear combinations of monomials and $V$ is a finite dimensional
$K$-algebra (see Proposition 2, \cite{AR}). By hypothesis and the remarks in
Result (b) above, the graph $E_{F}$, for every finite set of edges $F$,
satisfies Condition (ii). Hence, by Theorem 5 of \cite{AAJZ1}, each
$L_{K}(E_{F})$ has GK-dimension $\leq m$. This means each $B(S)$ and hence $L$
has GK-dimension $\leq m$. This proves (i).
\end{proof}

REMARK: We wish to point out that, for a finite graph $E$, the chain of ideals
in Condition (3) of Theorem \ref{Non Acyclic case} becomes finite with the
succesive quotients direct sums of matrix rings over $K$ and/or $K[x,x^{-1}]$
and this (for the finite graph $E$) has been shown in \cite{AAJZ2} to be a
necessary condition for $L_{K}(E)$ to have finite GK-dimension and that in
\cite{R}, it is also shown to be a sufficient condition for finite
GK-dimension of $L_{K}(E)$.

\begin{acknowledgement}
I am deeply grateful to Gene Abrams for giving me the benefit his preprint
\cite{AMT} containing the crucial Lemmas that were used in the proof of
Corollary \ref{Irrational => no fp} and for useful discussions.
\end{acknowledgement}

\end{document}